\def\Ddots{\mathinner{\mkern1mu\raise\p@
\vbox{\kern7\p@\hbox{.}}\mkern2mu
\raise4\p@\hbox{.}\mkern2mu\raise7\p@\hbox{.}\mkern1mu}}
\newtheorem{theorem}{Theorem}[section]
\newtheorem{corollary}{Corollary}[section]
\newtheorem{lemma}{Lemma}[section]
\newtheorem{proposition}{Proposition}[section]
\newtheorem{remark}{Remark}[section]
\def\A{{\mathfrak A}\, }
\begin{document}

\title{$*$-Jordan-type maps on alternative $*$-algebras}

\thanks{The first author was supported by the Coordenação de Aperfeiçoamento de Pessoal de Nível Superior - Brasil (CAPES)-Finance 001.}

\author[Aline J. O. Andrade]{Aline J. O. Andrade}
\address{Aline Jaqueline de Oliveira Andrade, Federal University of ABC, 
dos Estados Avenue, 5001, 
09210-580, Santo Andr\'{e}, Brazil\\ 
{\em E-mail}: {\tt aline.jaqueline@ufabc.edu.br}}{}

\author[Bruno L. M. Ferreira]{Bruno L. M. Ferreira}
\address{Bruno Leonardo Macedo Ferreira, Federal University of Technology, 
Avenida Professora Laura Pacheco Bastos, 800, 
85053-510, Guarapuava, Brazil\\
{\em E-mail}: {\tt brunolmfalg@gmail.com}}{}

\author[Liudmila~Sabinina]{Liudmila~Sabinina}
\address{Liudmila~Sabinina, Autonomous University of the State of Morelos\\
Cuernavaca, Mexico.\\
{\em E-mail}: {\tt liudmila@uaem.mx}}{}

\begin{abstract}
Let $\A$ and $\A'$ be two alternative $*$-algebras with identities $1_{\A}$ and $1_{\A'}$, respectively, and $e_1$ and $e_2 = 1_{\A} - e_1$ nontrivial symmetric idempotents in $\A$. In this paper we study the characterization of multiplicative $*$-Jordan-type maps on alternative algebras. 

\vspace*{.1cm}

\noindent{\it Keywords}: alternative algebra, multiplicative $*$-Jordan-type maps.
\vspace*{.1cm}

\noindent{\it 2020 MSC}:  17D05; 46L05.
\end{abstract}

\maketitle

\section{Introduction and Preliminaries}
The study on characterizations of certain maps on non-associative algebraic structures has become an active and broad line of research in recent years, we can mention \cite{chang, Fer3, Fer, bruth, Fer1, Fer2, Fer4, FerGur1}.
For the case of associative structures Bre$\check{s}$ar and Fo$\check{s}$ner in \cite{brefos1, brefos2}, presented the following definition: for $a, b \in R$, where $R$ is a $*$-ring, we denote by $\{a,b\}_* = ab+ba^{*}$ and $[a, b]_{*} = ab-ba^{*}$ the $*$-Jordan product and the $*$-Lie product, respectively. In \cite{LiLuFang}, the authors proved that a map $\varphi$ between two factor von Newmann algebras is a $*$-ring isomorphism if and only if $\varphi(\{a,b\}_*) = \{\varphi(a), \varphi(b)\}_*$. In \cite{Ferco}, Ferreira and Costa extended these new products and defined two other types of applications, named multiplicative $*$-Lie $n$-map and multiplicative $*$-Jordan $n$-map and used them to impose condition such that a map between $C^*$-algebras is a $*$-ring isomorphism. Even more, in \cite{Taghavi}, Taghavi et al considered the product $a \bullet b = a^*b+ba^*$ and studied when a bijective map is additive in $*$-algebras.

With this picture in mind, in this article we will discuss when a multiplicative $*$-Jordan $n$-map is a $*$-ring isomorphism on nonassociative structure, namely, alternative algebras. As a consequence of our main result, we provide an application on alternative $W^{*}$-factor.

Throughout the paper, the ground field is assumed to be of complex numbers. Consider the product $\{ x, y\}_* = xy + yx^*$ and let us define the following sequence of polynomials, as defined in \cite{Ferco}: 
$$q_{1*}(x) = x\, \,  \text{and}\, \,  q_{n*}(x_1, x_2, \ldots , x_n) = \left\{q_{(n-1)*}(x_1, x_2, \ldots , x_{n-1}) , x_n\right\}_{*},$$
for all integers $n \geq 2$. Thus, $q_{2*}(x_1, x_2) = \left\{x_1, x_2\right\}_{*}, \ q_{3*} (x_1, x_2, x_3) = \left\{\left\{x_1, x_2\right\}_{*} , x_3\right\}_{*}$, etc. Note that $q_{2*}$ is the product introduced by Bre$\check{s}$ar and Fo$\check{s}$ner \cite{brefos1, brefos2}. Then, using the nomenclature introduced in \cite{Ferco} we have a new class of maps (not necessarily additive): $\varphi : \A \longrightarrow \A'$ is a \textit{multiplicative $*$-Jordan $n$-map} if
\[
\varphi(q_{n*} (x_1, x_2, . . . ,x_n)) =  q_{n*} (\varphi(x_1), \varphi(x_2), . . . , \varphi(x_i), . . .,\varphi(x_n)),
\]
where $n \geq 2$ is an integer.
Multiplicative $*$-Jordan $2$-map, $*$-Jordan $3$-map 
and $*$-Jordan $n$-map are collectively referred to as \textit{multiplicative $*$-Jordan-type maps}.

Let be $e_1$ a nontrivial symmetric idempotent in $\A$ and $e_2 = 1_{\A} - e_1$ where $1_{\A}$ is the identity of $\A$. Let us consider an alternative $*$-algebra $\mathfrak{A}$ and fix a nontrivial symmetric idempotent $e_{1}\in\mathfrak{A}$. It is easy to see that $(e_ia) e_j=e_i(ae_j)~(i,j=1,2)$ for all $a\in \mathfrak{A}$. Then $\mathfrak{A}$ has a Peirce decomposition
$\mathfrak{A}=\mathfrak{A}_{11}\oplus \mathfrak{A}_{12}\oplus
\mathfrak{A}_{21}\oplus \mathfrak{A}_{22},$ where
$\mathfrak{A}_{ij}=e_{i}\mathfrak{A}e_{j}$ $(i,j=1,2)$ \cite{He}, satisfying the following multiplicative relations:
\begin{enumerate}\label{asquatro}
\item [\it (i)] $\mathfrak{A}_{ij}\mathfrak{A}_{jl}\subseteq\mathfrak{A}_{il}\
(i,j,l=1,2);$
\item [\it (ii)] $\mathfrak{A}_{ij}\mathfrak{A}_{ij}\subseteq \mathfrak{A}_{ji}\
(i,j=1,2);$
\item [\it (iii)] $\mathfrak{A}_{ij}\mathfrak{A}_{kl}=0,$ if $j\neq k$ and
$(i,j)\neq (k,l),\ (i,j,k,l=1,2);$
\item [\it (iv)] $x_{ij}^{2}=0,$ for all $x_{ij}\in \mathfrak{A}_{ij}\ (i,j=1,2;~i\neq j)$.
\end{enumerate}

\begin{remark}

Observe that in the case associative $a_{ij}b_{ij} = 0$ for $i \neq j$ but in general in alternative algebras, due the property $(ii)$, we do not have this relation.
\end{remark}

The following two claims play a very important role in the further development of the paper. By definition of involution clearly we get
 
\begin{proposition}\label{obs}
$(\A_{ij})^* \subseteq \A_{ji}$ for $i,j \in \left\{1,2\right\}.$
\end{proposition}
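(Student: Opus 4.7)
The plan is to exploit two facts: that the involution $*$ is an anti-automorphism (so $(xy)^* = y^* x^*$ on the underlying algebra, even when it is merely alternative), and the hypothesis that the Peirce idempotents are symmetric, i.e.\ $e_i^* = e_i$ for $i = 1, 2$. Given an arbitrary element of $\A_{ij}$, I would write it in the form $e_i a e_j$ for some $a \in \A$; this notation is unambiguous because the excerpt already records the associativity relation $(e_i a) e_j = e_i (a e_j)$, which is a direct consequence of the alternative identities applied to an idempotent.

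With that in hand, applying $*$ and the anti-multiplicative property should immediately yield $(e_i a e_j)^* = e_j^* a^* e_i^* = e_j a^* e_i$, and this last expression visibly lies in $e_j \A e_i = \A_{ji}$. Hence $(\A_{ij})^* \subseteq \A_{ji}$, which is exactly the claim.

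There is no genuine obstacle here: the statement is a one-line bookkeeping computation, and the authors themselves introduce it with the remark ``by definition of involution clearly we get.'' The only subtlety worth flagging is that we rely on $*$ being anti-multiplicative in the alternative (non-associative) setting — but this is built into the definition of an alternative $*$-algebra, so it requires no separate justification.
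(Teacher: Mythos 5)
Your proposal is correct and follows essentially the same route as the paper: write an element of $\A_{ij}$ as $e_i a e_j$, apply the anti-multiplicativity of $*$ together with $e_i^* = e_i$, and read off that the result lies in $e_j \A e_i = \A_{ji}$. No meaningful difference from the authors' one-line computation.
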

\begin{proof}
If $a_{ij} \in \A_{ij}$ then $$a^*_{ij} = (e_i a_{ij} e_j)^* = (e_j)^*(a_{ij})^* (e_i)^* = e_j (a_{ij})^* e_i \in \A_{ji}.$$
\end{proof}
\begin{proposition}\label{Claim2} Let $\A$ and $\A'$ be two alternative $*$-algebras and $\varphi: \A \rightarrow \A'$ a bijective map which satisfies
\[
\varphi(q_{n*}(\xi, \ldots,\xi, a,b)) = q_{n*}(\varphi(\xi),\ldots,\varphi(\xi),\varphi(a),\varphi(b))
\]
for all $a, b \in \A$ and $\xi \in \left\{1_{\A}, e_1, e_2\right\}$. Let $x,y$ and $h$ be in $\A$ such that $\varphi(h) = \varphi(x) + \varphi(y)$. Then,
 given $z \in \A$,
$$
\begin{aligned}
\varphi(q_{n*}(t,...,t,h,z)) = \varphi(q_{n*}(t,...,t,x,z)) + \varphi(q_{n*}(t,...,t,y,z))
\end{aligned}
$$
and
$$
\begin{aligned}
\varphi(q_{n*}(t,...,t,z,h)) = \varphi(q_{n*}(t,...,t,z,x)) + \varphi(q_{n*}(t,...,t,z,y))
\end{aligned}
$$
for $t = 1_\A$ or $t = e_i$, $i = 1,2$.

\end{proposition}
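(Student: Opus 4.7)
The plan is to push the given identity forward into $\A'$, exploit the fact that the polynomial $q_{n*}$ is additive in each of its last two slots, and then pull back again using the hypothesis. No structural property of $\A$ or $\A'$ beyond bilinearity of multiplication and additivity of the involution is needed, so alternativity plays no role in this particular claim.

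The preliminary observation is that $q_{n*}$ is additive in its $(n-1)$-th and $n$-th arguments. Indeed, for any $*$-algebra the product $\{u,v\}_* = uv + vu^*$ is additive in both $u$ and $v$, using only the distributive law and the additivity of $(\,\cdot\,)^*$. Since $q_{k*}(x_1,\ldots,x_k) = \{q_{(k-1)*}(x_1,\ldots,x_{k-1}), x_k\}_*$, additivity of $q_{n*}$ in its $n$-th slot is immediate, and additivity in the $(n-1)$-th slot follows by expanding the outer $\{\cdot,\cdot\}_*$ after first noting that $q_{(n-1)*}$ is additive in its last entry by the same reasoning.

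Given this, the first identity is a three-step manipulation. Applying the hypothesis with $\xi = t$ yields
\begin{equation*}
\varphi(q_{n*}(t,\ldots,t,h,z)) = q_{n*}(\varphi(t),\ldots,\varphi(t),\varphi(h),\varphi(z)).
\end{equation*}
Substituting $\varphi(h) = \varphi(x) + \varphi(y)$ and using additivity of $q_{n*}$ in its $(n-1)$-th slot, the right-hand side splits as
\begin{equation*}
q_{n*}(\varphi(t),\ldots,\varphi(t),\varphi(x),\varphi(z)) + q_{n*}(\varphi(t),\ldots,\varphi(t),\varphi(y),\varphi(z)),
\end{equation*}
and invoking the hypothesis in the reverse direction to each summand converts it back into $\varphi(q_{n*}(t,\ldots,t,x,z)) + \varphi(q_{n*}(t,\ldots,t,y,z))$, as required.

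The second identity is established by exactly the same three-step argument, with additivity of $q_{n*}$ in its last slot replacing additivity in the $(n-1)$-th slot. I do not anticipate any real obstacle: the statement is essentially bookkeeping about how bilinearity of the $*$-Jordan product propagates through a map that intertwines $q_{n*}$ in the restricted regime specified by the hypothesis.
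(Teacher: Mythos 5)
Your proposal is correct and follows essentially the same route as the paper: apply the intertwining hypothesis, substitute $\varphi(h)=\varphi(x)+\varphi(y)$, split using the additivity of $q_{n*}$ in its last two arguments (which the paper simply calls ``multilinearity''), and apply the hypothesis in reverse. Your explicit justification that $\{u,v\}_*$ is additive in each slot is a welcome, if minor, elaboration of what the paper takes for granted.
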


\begin{proof}
Using the definition of $\varphi$ and multilinearity of $q_{n*}$ we obtain
$$
\begin{aligned}
\varphi(q_{n*}(t,...,t,h,z)) &= q_{n*}(\varphi(t),...,\varphi(t),\varphi(h),\varphi(z)) \\
&= q_{n*}(\varphi(t),...,\varphi(t),\varphi(x)+\varphi(y),\varphi(z)) \\
&= q_{n*}(\varphi(t),...,\varphi(t),\varphi(x),\varphi(z)) + q_{n*}(\varphi(t),...,\varphi(t),\varphi(y),\varphi(z)) \\
&= \varphi(q_{n*}(t,...,t,x,z)) + \varphi(q_{n*}(t,...,t,y,z)).
\end{aligned}
$$
In a similar way we have
$$
\begin{aligned}
\varphi(q_{n*}(t,...,t,z,h)) &= \varphi(q_{n*}(t,...,t,z,x)) + \varphi(q_{n*}(t,...,t,z,y)).
\end{aligned}
$$

\end{proof}

\section{Main theorem}

We shall prove as follows a part of the the main result of this paper:

\begin{theorem}\label{mainthm1} 
Let $\A$ and $\A'$ be two alternative $*$-algebras with identities $I_{\A}$ and $1_{\A'}$, respectively, and $e_1$ and $e_2 = 1_{\A} - e_1$ nontrivial symmetric idempotents in $\A$. Suppose that $\A$ satisfies
\begin{eqnarray*}
  &&\left(\spadesuit\right) \ \ \  \ \ \  x (\A e_i) = \left\{0\right\} \ \ \  \mbox{implies} \ \ \ x = 0.
\end{eqnarray*}
Even more, suppose that $\varphi: \A \rightarrow \A'$ is a bijective unital map which satisfies
\[
\varphi(q_{n*}(\xi, \ldots,\xi, a,b)) = q_{n*}(\varphi(\xi),\ldots,\varphi(\xi),\varphi(a),\varphi(b))
\]
for all $a, b \in \A$ and $\xi \in \left\{1_{\A}, e_1, e_2\right\}$. Then $\varphi$ is $*$-additive.
\end{theorem}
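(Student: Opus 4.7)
The overall plan is to reduce additivity to the Peirce decomposition $\A=\A_{11}\oplus\A_{12}\oplus\A_{21}\oplus\A_{22}$ and recover $*$-preservation as a separate step. First I would record the basic normalizations $\varphi(0)=0$ (by applying the hypothesis to a string of zeros) and $\varphi(1_\A)=1_{\A'}$ (assumed). From the multiplicative hypothesis at $\xi=e_i$, together with the characterization of $\A_{ij}$ in terms of $*$-Jordan brackets against $e_1$ and $e_2$, I would verify $\varphi(\A_{ij})\subseteq\A'_{ij}$, since those brackets detect the Peirce grade of an element and $\varphi$ intertwines them.

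For additivity, the standard device is to pick, by bijectivity, the unique $h$ with $\varphi(h)=\varphi(a)+\varphi(b)$ and show $h=a+b$. Proposition~\ref{Claim2} propagates this relation one layer deeper in the $*$-Jordan product: for every $z\in\A$ and $t\in\{1_\A,e_1,e_2\}$,
\[
\varphi(q_{n*}(t,\ldots,t,h,z))=\varphi(q_{n*}(t,\ldots,t,a,z))+\varphi(q_{n*}(t,\ldots,t,b,z)).
\]
Once additivity is secured on sums lying in a single Peirce component (a base step, handled directly via the $n=2$ case), the right-hand side collapses to $\varphi(q_{n*}(t,\ldots,t,a+b,z))$, and injectivity yields $q_{n*}(t,\ldots,t,h-a-b,z)=0$ for every $z$. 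Unfolding the outermost $*$-Jordan bracket turns this into $wz+zw^*=0$, where $w$ is an iterated bracket of $h-a-b$ with the repeated $t$'s; specializing $z\in\A e_i$ and invoking $(\spadesuit)$ forces $w=0$, and an induction on the nesting depth eventually yields $h=a+b$. I would organize the case analysis by first proving $\varphi(a_{ij}+b_{kl})=\varphi(a_{ij})+\varphi(b_{kl})$ for each pair of Peirce indices, then closing the loop inside a single $\A_{ij}$, and finally assembling the four Peirce parts to obtain full additivity.

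To finish, I would establish $\varphi(a^*)=\varphi(a)^*$ by exploiting the $*$-asymmetry in $q_{2*}(x,y)=xy+yx^*$: testing the hypothesis with $b=1_\A$ extracts a relation between $\varphi(a)$ and $\varphi(a^*)$ that, together with the additivity already in hand, pins the two together. The main obstacle I expect is the non-associativity emphasized in the Remark: property (ii), $\A_{ij}\A_{ij}\subseteq\A_{ji}$, breaks several cross-term cancellations that are automatic in an associative setting, so simplifications of products such as $e_i(ax_{ij})$ must be justified via the alternative (or Moufang) identities rather than by direct associativity. A secondary nuisance is the bookkeeping of the $n-2$ repeated $\xi$ arguments: when $\xi=e_i$ the iterated bracket produces nontrivial Peirce-graded coefficients that must be tracked carefully to ensure the final reduction to $(\spadesuit)$ actually applies.
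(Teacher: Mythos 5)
Your skeleton is the paper's: use surjectivity to choose $h$ with $\varphi(h)=\varphi(a)+\varphi(b)$, propagate the relation with Proposition~\ref{Claim2}, compare Peirce components after applying injectivity, reduce to $(\spadesuit)$, and finish the $*$-step via $q_{n*}(1_{\A},\ldots,1_{\A},a,1_{\A})=2^{n-2}(a+a^*)$. But there is a genuine gap at the centre. You dispose of additivity on sums lying in a single Peirce component as ``a base step, handled directly via the $n=2$ case,'' and then feed it into the collapse of the right-hand side. This gets the logical order backwards and trivializes the hardest part of the proof. First, you cannot pass to $n=2$: the hypothesis is assumed for one fixed $n$ only. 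Second, same-component additivity is not a base case; in the paper it comes \emph{last} (Lemmas~\ref{lema5.1}--\ref{lema6}), after the mixed-component statements (Lemmas~\ref{lema1}--\ref{lema4}) are already available, and it is obtained by a different mechanism: one expands identities such as $q_{n*}\bigl(1_{\A},\ldots,1_{\A},a_{12},\tfrac{1}{2^{n-2}}(e_2+b_{12})\bigr)=a_{12}+a_{12}b_{12}+a_{12}^*+b_{12}a_{12}^*$, whose cross terms lie in distinct Peirce components and can be separated precisely because four-term mixed additivity (Lemma~\ref{lema4}) is in hand. The mixed-component lemmas themselves need no single-component input: they follow from computing $q_{n*}(e_i,\ldots,e_i,t)$ and using that this bracket annihilates certain components (e.g.\ $q_{n*}(e_2,\ldots,e_2,a_{11})=0$), then reading off Peirce coordinates. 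As written, your induction has nothing to start from, and your closing sentence (mixed pairs first, then single components) contradicts the earlier claim that the single-component case is the base step.

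A secondary but real gap: from $wz+zw^*=0$ for all $z$ in a Peirce component you cannot directly conclude $wz=0$ and invoke $(\spadesuit)$, because the adjoint term does not vanish for free. The paper kills it by replacing $c_{12}$ with $ic_{12}$ (the ground field is $\mathbb{C}$) and adding the two resulting relations; some such device is needed, and it is exactly the point where the involution threatens the reduction to $(\spadesuit)$. Finally, the inclusion $\varphi(\A_{ij})\subseteq\A'_{ij}$ that you propose to establish at the outset is proved in the paper only after additivity (Lemma~\ref{lemam1}) and is not needed for Theorem~\ref{mainthm1}; placing it first would require an independent argument you do not supply.
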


It is easy to see that any prime alternative $*$-algebra over ground field of characteristic different $2,3$  satisfies $\left(\spadesuit\right)$ hence we have  

\begin{corollary}
Let $\A$ be prime alternative $*$-algebra and $\A'$ an alternative $*$-algebra with identities $1_{\A}$ and $1_{\A'}$, respectively, and $e_1$ and $e_2 = 1_{\A} - e_1$ nontrivial symmetric idempotents in $\A$. Suppose that $\varphi: \A \rightarrow \A'$ is a bijective unital map which satisfies
\[
\varphi(q_{n*}(\xi, \ldots,\xi, a,b)) = q_{n*}(\varphi(\xi),\ldots,\varphi(\xi),\varphi(a),\varphi(b))
\]
for all $a, b \in \A$ and $\xi \in \left\{1_{\A}, e_1, e_2\right\}$. Then $\varphi$ is $*$-additive.

\end{corollary}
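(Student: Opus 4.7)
The plan is to follow the standard blueprint for $*$-Jordan-type rigidity theorems: use surjectivity of $\varphi$ to realize $\varphi(x)+\varphi(y)$ as $\varphi(h)$ for some $h\in\A$, exploit Proposition~\ref{Claim2} together with the defining $n$-map identity to obtain Peirce-graded constraints on $h$, and invoke $(\spadesuit)$ to collapse these constraints to the equality $h=x+y$. The whole argument is naturally organized around the decomposition $\A=\A_{11}\oplus\A_{12}\oplus\A_{21}\oplus\A_{22}$.

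First I would dispatch the preliminaries: $\varphi(0)=0$ follows from $q_{n*}(\xi,\ldots,\xi,0,0)=0$ and injectivity of $\varphi$, and by unitality $\varphi(1_\A)=1_{\A'}$, with $1_\A,e_1,e_2$ self-adjoint. Then comes the central step, additivity on each Peirce component. Fix $a_{ij},b_{ij}\in\A_{ij}$, pick $h$ with $\varphi(h)=\varphi(a_{ij})+\varphi(b_{ij})$, and decompose $h=\sum_{k,l}h_{kl}$. Applying Proposition~\ref{Claim2} in turn with $t\in\{e_1,e_2,1_\A\}$ and with $z$ ranging through the four Peirce pieces (including the specific choice $z=e_1$, which annihilates $a_{ij},b_{ij}$ under the $*$-Jordan product and so forces $\{h,e_1\}_*=0$ by injectivity), one gradually isolates relations of the form
\[
p_{kl}(h-a_{ij}-b_{ij})\cdot\A e_m=\{0\},
\]
where $p_{kl}$ projects onto the $(k,l)$-Peirce block. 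The multiplication rules (i)--(iv) and the involutions $e_k^{*}=e_k$, $1_\A^{*}=1_\A$ are used throughout to strip away irrelevant blocks. The hypothesis $(\spadesuit)$ then kills each off-block, and sweeping over all $(k,l)$ yields $h=a_{ij}+b_{ij}$, so $\varphi$ is additive on $\A_{ij}$.

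With componentwise additivity in hand, cross-additivity $\varphi\bigl(\sum_{i,j}a_{ij}\bigr)=\sum_{i,j}\varphi(a_{ij})$ follows by the same preimage-plus-squeeze technique, and combining with the Peirce decomposition of a general element gives full additivity $\varphi(a+b)=\varphi(a)+\varphi(b)$. For $*$-preservation I would compute directly that
\[
q_{n*}(1_\A,\ldots,1_\A,x,1_\A)=2^{n-2}(x+x^{*}),
\]
and similarly in $\A'$; applying $\varphi$ and using unitality yields $\varphi(2^{n-2}(x+x^{*}))=2^{n-2}(\varphi(x)+\varphi(x)^{*})$. The scalar $2^{n-2}$ cancels via additivity and the expansion $\varphi(x+x^{*})=\varphi(x)+\varphi(x^{*})$, giving $\varphi(x^{*})=\varphi(x)^{*}$ and completing the proof of $*$-additivity.

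The technical heart of the argument, and the expected main obstacle, is the Peirce-component step. Unlike the associative setting, property~(ii) $\A_{ij}\A_{ij}\subseteq\A_{ji}$ leaves squares of off-diagonal elements potentially nonzero, so the Peirce expansion of $q_{n*}(t,\ldots,t,h,z)$ produces cross terms absent in the associative theory. Sequencing the choices of $t$ and $z$ correctly so that, after the surviving identities are assembled, one obtains expressions to which $(\spadesuit)$ applies block by block is where the bookkeeping is delicate; moreover the chicken-and-egg issue of pulling identities back through $\varphi$ without yet having full additivity must be handled by carefully selecting $z$ so that the right-hand side of Proposition~\ref{Claim2} reduces to $\varphi(0)$, allowing injectivity alone to close the loop.
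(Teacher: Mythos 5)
Your proposal is, in substance, a re-derivation of Theorem \ref{mainthm1}, and for that part it follows essentially the same route as the paper: Peirce decomposition, preimages $h$ with $\varphi(h)=\varphi(x)+\varphi(y)$, Proposition \ref{Claim2}, and the identity $q_{n*}(1_{\A},\ldots,1_{\A},x,1_{\A})=2^{n-2}(x+x^{*})$ for the $*$-step; the cleanest write-up would simply cite that theorem. Either way, there is one genuine gap: the corollary's hypothesis is that $\A$ is \emph{prime}, not that it satisfies $\left(\spadesuit\right)$, and you invoke $\left(\spadesuit\right)$ throughout without ever deriving it from primeness. That derivation is the entire content of the corollary relative to the theorem, and it is the one step your argument omits. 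You need to show that in a prime alternative algebra $x(\A e_i)=\{0\}$ with $e_i\neq 0$ forces $x=0$, e.g.\ by observing that $\A e_i$ is a nonzero left ideal and appealing to the element-wise form of primeness for alternative rings (valid away from characteristics $2$ and $3$; here the ground field is $\mathbb{C}$). Without that sentence you have proved the theorem, not the corollary.

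Two smaller points. First, $\varphi(0)=0$ does not follow from ``$q_{n*}(\xi,\ldots,\xi,0,0)=0$ and injectivity'': applying $\varphi$ only yields $\varphi(0)=q_{n*}(\varphi(\xi),\ldots,\varphi(0),\varphi(0))$, which does not visibly force $\varphi(0)=0$. The correct argument uses surjectivity: choose $x$ with $\varphi(x)=0$ and compute $\varphi(0)=\varphi(q_{n*}(e_1,\ldots,e_1,e_2,x))=q_{n*}(\varphi(e_1),\ldots,\varphi(e_2),0)=0$. Second, your plan to get same-block additivity on the off-diagonal components by ``preimage-plus-squeeze'' is exactly where the cross terms $\A_{ij}\A_{ij}\subseteq\A_{ji}$ that you flag become an obstruction; the paper avoids this by the direct identities behind Lemmas \ref{lema5.1} and \ref{lema5.2}, namely evaluating $q_{n*}\left(1_{\A},\ldots,1_{\A},\frac{1}{2^{n-2}}(e_i+a_{ij}),e_j+b_{ij}\right)$ and cancelling. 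As written, your outline acknowledges the difficulty but does not resolve it, so that step remains a plan rather than a proof.
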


The following lemmas have the same hypotheses as the Theorem \ref{mainthm1} and we need them to prove the $*$-additivity of $\varphi$. 

\begin{lemma}\label{Claim1}  $\varphi(0) = 0$.
\end{lemma}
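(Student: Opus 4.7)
The approach hinges on two observations: first, the $*$-Jordan product $\{a,b\}_* = ab + ba^*$ is absorbing in the last slot, i.e.\ $\{x,0\}_* = 0$ for every $x$, which by the recursive definition of $q_{n*}$ propagates to $q_{n*}(x_1,\ldots,x_{n-1},0) = 0$; second, the hypothesis allows one to move $\varphi$ across $q_{n*}$ whenever the first $n-2$ slots are filled by elements of $\{1_\A, e_1, e_2\}$. Combined with bijectivity and unitality, this should force $\varphi(0)=0$ in a single stroke.

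First, I would use surjectivity of $\varphi$ to pick $h\in\A$ with $\varphi(h)=0$. Then I would evaluate the element $q_{n*}(1_\A,\ldots,1_\A,h,0)\in\A$ directly from the recursion: since the last argument is $0$, the outermost bracket gives
\[
q_{n*}(1_\A,\ldots,1_\A,h,0) \;=\; \{q_{(n-1)*}(1_\A,\ldots,1_\A,h),\,0\}_* \;=\; 0,
\]
so the left-hand side of the hypothesis becomes simply $\varphi(0)$.

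For the right-hand side, unitality gives $\varphi(1_\A)=1_{\A'}$, and $\varphi(h)=0$, so the hypothesis reads
\[
\varphi(0) \;=\; q_{n*}\bigl(1_{\A'},\ldots,1_{\A'},\,0,\,\varphi(0)\bigr) \;=\; \bigl\{q_{(n-1)*}(1_{\A'},\ldots,1_{\A'},0),\,\varphi(0)\bigr\}_*.
\]
Applying the same absorbing observation to the inner factor, $q_{(n-1)*}(1_{\A'},\ldots,1_{\A'},0)=0$, so the outer bracket collapses to $\{0,\varphi(0)\}_* = 0$. Hence $\varphi(0)=0$.

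I do not anticipate any real obstacle: unitality is used essentially to reduce $\varphi(\xi)$ on the right-hand side, and surjectivity is used essentially to produce the preimage $h$ of $0$. The case $n=2$ is not covered by a $\xi$-slot in the hypothesis, but the same trick applies taking $a=h$ and $b=0$ directly in $\varphi(q_{2*}(a,b))=q_{2*}(\varphi(a),\varphi(b))$. Note that condition $(\spadesuit)$ plays no role here; it will presumably be invoked in later lemmas.
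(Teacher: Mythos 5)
Your proof is correct and follows essentially the same strategy as the paper: take a preimage of $0$ under $\varphi$ and feed the hypothesis a tuple whose $q_{n*}$-value vanishes on both sides. The paper's test tuple is $(e_1,\ldots,e_1,e_2,x)$ with $\varphi(x)=0$, which vanishes in $\A$ because $e_1e_2=e_2e_1=0$, while yours is $(1_\A,\ldots,1_\A,h,0)$, which vanishes because $0$ absorbs in the last slot; both are one-line applications of surjectivity plus the multiplicativity hypothesis, and yours has the mild advantage of also covering the case $n=2$ literally.
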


\begin{proof}
Since $\varphi$ is surjective, there exists $x \in \A$ such that $\varphi(x) = 0$. Then
$$
0 = q_{n*}(\varphi(e_1),...,\varphi(e_1),\varphi(e_2),\varphi(x)) = \varphi(q_{n*}(e_1,...,e_1,e_2,x)) = \varphi(0).
$$
Therefore, $\varphi(0) = 0$.
\end{proof}

\begin{lemma}\label{lema1} For any $a_{11} \in \A_{11}$ and $b_{22} \in \A_{22}$, we have 
$$\varphi(a_{11} + b_{22}) = \varphi(a_{11}) + \varphi(b_{22}).$$
\end{lemma}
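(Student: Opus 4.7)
The plan is to use surjectivity of $\varphi$ to pick $h\in\A$ with $\varphi(h) = \varphi(a_{11}) + \varphi(b_{22})$, Peirce-decompose $h = h_{11}+h_{12}+h_{21}+h_{22}$, and then show each of the four components of $h$ matches the desired value, so that by injectivity $h = a_{11}+b_{22}$ and the lemma follows. The key computational identity I will lean on is $q_{k*}(e_i,\ldots,e_i) = 2^{k-1}e_i$ (using $e_i^* = e_i$), which collapses any $q_{n*}(e_i,\ldots,e_i,w,z)$ to $2^{n-3}\{e_iw+we_i,\,z\}_{*}$, whose Peirce expansion is routine.

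First, to annihilate the off-diagonal entries of $h$, I would invoke Proposition \ref{Claim2} with $t=e_1$ and $z=e_2$. Using the Peirce identities together with $(\A_{ij})^*\subseteq\A_{ji}$ (Proposition \ref{obs}) and the multiplicative rules (i)--(iv), a direct computation gives $q_{n*}(e_1,\ldots,e_1,a_{11},e_2)=0=q_{n*}(e_1,\ldots,e_1,b_{22},e_2)$, while $q_{n*}(e_1,\ldots,e_1,h,e_2) = 2^{n-3}\bigl(h_{12}+(h_{12})^*\bigr)$. Since $h_{12}\in\A_{12}$ and $(h_{12})^*\in\A_{21}$ sit in independent Peirce summands, injectivity of $\varphi$ forces $h_{12}=0$; interchanging the roles of $e_1$ and $e_2$ gives $h_{21}=0$. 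Next, with $h=h_{11}+h_{22}$, I would apply Proposition \ref{Claim2} again with $t=e_1$ and $z=x_{21}$ ranging over $\A_{21}$. The analogous calculation produces $q_{n*}(e_1,\ldots,e_1,h,x_{21}) = 2^{n-2}x_{21}(h_{11})^*$, $q_{n*}(e_1,\ldots,e_1,a_{11},x_{21}) = 2^{n-2}x_{21}(a_{11})^*$, and $q_{n*}(e_1,\ldots,e_1,b_{22},x_{21})=0$; hence $x_{21}(h_{11}-a_{11})^* = 0$ for every $x_{21}\in\A_{21}$. Applying the involution converts this into $(h_{11}-a_{11})\A_{12} = 0$, and combined with $(h_{11}-a_{11})\A_{22}\subseteq\A_{11}\A_{22} = 0$ we obtain $(h_{11}-a_{11})(\A e_2) = 0$, so hypothesis $(\spadesuit)$ forces $h_{11}=a_{11}$. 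The symmetric argument with $t=e_2$ and $z\in\A_{12}$ yields $h_{22}=b_{22}$.

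The main obstacle I foresee is bookkeeping inside a non-associative setting: one must verify that casual rewrites such as $(e_1h+he_1)e_2 = h_{12}$ or $e_2(e_1h^*+h^*e_1) = (h_{12})^*$ really follow from the basic Peirce identity $(e_ia)e_j = e_i(ae_j)$ together with rules (i)--(iv), rather than from full associativity, and that the anti-automorphism property of $*$ is used only on products of two elements (where Artin's theorem makes it automatic). Once this bookkeeping is set up, the vanishing of the $a_{11}$- and $b_{22}$-contributions and the step from $\A_{21}(h_{11}-a_{11})^* = 0$ through the involution to an application of $(\spadesuit)$ become essentially mechanical.
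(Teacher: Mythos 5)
Your argument is correct, but it takes a longer route than the paper does, and the difference is worth noting. The paper also starts from a preimage $t$ of $\varphi(a_{11})+\varphi(b_{22})$, but it pins down all four Peirce components using only the two identities $q_{n*}(e_i,\ldots,e_i,t)=2^{n-2}(e_it+te_i)$ for $i=1,2$: since $q_{n*}(e_1,\ldots,e_1,b_{22})=0$ and $q_{n*}(e_2,\ldots,e_2,a_{11})=0$, injectivity gives $2t_{11}+t_{12}+t_{21}=2a_{11}$ and $2t_{22}+t_{12}+t_{21}=2b_{22}$, and comparing Peirce components of these two equations already yields $t_{11}=a_{11}$, $t_{22}=b_{22}$, $t_{12}=t_{21}=0$ with no further input. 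Your version instead kills $h_{12},h_{21}$ via $q_{n*}(e_1,\ldots,e_1,h,e_2)=2^{n-3}(h_{12}+h_{12}^*)$ and then recovers $h_{11},h_{22}$ by testing against all $x_{21}\in\A_{21}$ (resp.\ $x_{12}\in\A_{12}$), passing through the involution and finally invoking $(\spadesuit)$. All of these computations check out in the alternative setting (the products involved are covered by the Peirce rules (i)--(iv) and $(e_ia)e_j=e_i(ae_j)$, and $(\A_{21})^*=\A_{12}$ since $*$ is an involution), so the proof is valid; but it spends the hypothesis $(\spadesuit)$ where it is not needed --- the paper reserves $(\spadesuit)$ for Lemmas \ref{lema2} and \ref{lema6}, where the diagonal or off-diagonal data genuinely cannot be read off from Peirce components alone. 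One small aside: your remark that the anti-automorphism identity $(ab)^*=b^*a^*$ needs Artin's theorem is off the mark --- that identity is part of the definition of a $*$-algebra and holds for all $a,b$, independently of associativity of subalgebras.
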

\begin{proof}
Since $\varphi$ is surjective, given $\varphi(a_{11})+\varphi(b_{22}) \in \A'$ there exists $t \in \A$ such that 
$\varphi(t) = \varphi(a_{11})+\varphi(b_{22})$, with $t=t_{11}+t_{12}+t_{21}+t_{22}$. Now, by Proposition \ref{Claim2}
$$
\varphi(q_{n*}(e_i,...,e_i,t)) = \varphi(q_{n*}(e_i,...,e_i,a_{11})) + \varphi(q_{n*}(e_i,...,e_i,b_{22})),
$$
with $i=1,2$. It follows that
$$
\varphi(2^{n-2}(e_it + te_i)) = \varphi(2^{n-2}(e_ia_{11} + a_{11}e_i)) + \varphi(2^{n-2}(e_ib_{22} + b_{22}e_i)).
$$
Using the injectivity of $\varphi$ we obtain
$2^{n-2}(2t_{11} + t_{12} + t_{21}) = 2^{n-2}(2a_{11})$ and
$2^{n-2}(2t_{22} + t_{12} + t_{21}) = 2^{n-2}(2b_{22}).$ Then $t_{11}=a_{11}$, $t_{22}=b_{22}$ and $t_{12}=t_{21}=0$.
\end{proof}

\begin{lemma}\label{lema2}
For any $a_{12} \in \A_{12}$ and $b_{21} \in \A_{21}$, we have $\varphi(a_{12} + b_{21}) = \varphi(a_{12}) + \varphi(b_{21})$.
\end{lemma}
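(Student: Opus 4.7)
The plan is to mimic the template of Lemma 2.2: by surjectivity pick a preimage $t$ of $\varphi(a_{12})+\varphi(b_{21})$, write $t=t_{11}+t_{12}+t_{21}+t_{22}$ in Peirce components, and use Proposition 2.2 repeatedly with carefully chosen test elements to pin down each $t_{ij}$. The goal is to show $t_{11}=t_{22}=0$, $t_{12}=a_{12}$, $t_{21}=b_{21}$, after which injectivity of $\varphi$ finishes the proof.

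For the diagonal components, I would apply Proposition 2.2 exactly as in Lemma 2.2, taking $\xi=e_i$ in the expression $q_{n*}(e_i,\ldots,e_i,t)$. Since $e_i$ is symmetric, an easy induction gives $q_{n*}(e_i,\ldots,e_i,x)=2^{n-2}(e_ix+xe_i)$, which for $i=1$ evaluates to $2^{n-2}(2t_{11}+t_{12}+t_{21})$ on $t$, to $2^{n-2}a_{12}$ on $a_{12}$, and to $2^{n-2}b_{21}$ on $b_{21}$; for $i=2$ one gets $2^{n-2}(2t_{22}+t_{12}+t_{21})$ on $t$ and the same two values on $a_{12},b_{21}$. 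Thus Proposition 2.2 yields two identities whose right-hand sides agree, and injectivity of $\varphi$ together with directness of the Peirce decomposition forces $t_{11}=t_{22}=0$.

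For the off-diagonal components, I would apply Proposition 2.2 with $\xi=e_1$ and the auxiliary element $z=e_2$ (permissible since $z\in\A$ is arbitrary there), using the form $q_{n*}(e_1,\ldots,e_1,h,z)$. Here the key computation is
\[
q_{n*}(e_1,\ldots,e_1,x,e_2)=2^{n-3}\bigl(x_{12}+x_{12}^{*}\bigr),
\]
carried out by expanding $\{2^{n-3}(e_1x+xe_1),e_2\}_{*}$ and using the Peirce identity $(e_ia)e_j=e_i(ae_j)$, right alternativity (to kill $(xe_1)e_2$ and analogous terms), Proposition \ref{obs} and property (iii) of the Peirce relations. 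Evaluating on $t$, $a_{12}$, $b_{21}$ collapses the right-hand side of Proposition 2.2 to a single term because $(b_{21})_{12}=0$, giving
\[
\varphi\bigl(2^{n-3}(t_{12}+t_{12}^{*})\bigr)=\varphi\bigl(2^{n-3}(a_{12}+a_{12}^{*})\bigr),
\]
whence $t_{12}+t_{12}^{*}=a_{12}+a_{12}^{*}$ and projecting onto the $\A_{12}$-summand yields $t_{12}=a_{12}$. The symmetric computation with $\xi=e_2$, $z=e_1$ gives $t_{21}=b_{21}$.

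The only real obstacle is the nonassociative computation of $q_{n*}(e_1,\ldots,e_1,x,e_2)$: since $\A$ is merely alternative, every reassociation has to be justified by a Peirce identity, one of the alternative laws, or items (i)--(iv) of the Peirce multiplication table, and one has to be careful with involutions (using $(e_ia)^{*}=a^{*}e_i$ because $e_i$ is symmetric). Once this identity is in hand, the rest of the argument is formal and yields $t=a_{12}+b_{21}$, so that $\varphi(a_{12}+b_{21})=\varphi(t)=\varphi(a_{12})+\varphi(b_{21})$.
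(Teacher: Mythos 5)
Your argument is correct, but it reaches the key identifications $t_{12}=a_{12}$ and $t_{21}=b_{21}$ by a genuinely different route than the paper. For the diagonal components the paper inserts a normalized factor $\tfrac{1}{2^{n-1}}e_1$ so that the right-hand side collapses to $\varphi(a_{12})+\varphi(b_{21})=\varphi(t)$ and injectivity gives $e_1t+te_1=t$ directly; you instead keep the unnormalized $q_{n*}(e_i,\ldots,e_i,t)=2^{n-2}(e_it+te_i)$ for $i=1,2$, observe that the two resulting equations have identical right-hand sides, and conclude $t_{11}=t_{22}\in\A_{11}\cap\A_{22}=\{0\}$ -- both work. The more substantial divergence is in the off-diagonal step: the paper applies a second, nested $q_{n*}(e_2,\ldots,e_2,\cdot\,,e_2)$ to $q_{n*}(e_2,\ldots,e_2,t,c_{12})$ for an \emph{arbitrary} $c_{12}\in\A_{12}$, arrives at $(t_{21}-b_{21})c_{12}+c_{12}^*(t_{21}^*-b_{21}^*)=0$, separates the two terms by replacing $c_{12}$ with $ic_{12}$ (using that the ground field is $\mathbb{C}$), and only then invokes the hypothesis $(\spadesuit)$ to get $t_{21}=b_{21}$. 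You use the single fixed test element $z=e_2$ and the identity $q_{n*}(e_1,\ldots,e_1,x,e_2)=2^{n-3}(x_{12}+x_{12}^*)$ (which I checked against the Peirce relations and Proposition \ref{obs}; it is valid), after which directness of the Peirce decomposition alone yields $t_{12}=a_{12}$. This avoids both the complex-scalar trick and any appeal to $(\spadesuit)$ in this lemma, which is a genuine economy and slightly more general. One minor caveat: your displayed identity holds as written only for $n\geq 3$; for $n=2$ there are no leading $e_1$'s and one gets $q_{2*}(x,e_2)=xe_2+e_2x^*=x_{12}+x_{22}+x_{12}^*+x_{22}^*$, but projecting onto $\A_{12}$ still gives $t_{12}=a_{12}$, so only the constant needs adjusting, not the argument.
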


\begin{proof}
Since $\varphi$ is surjective, given $\varphi(a_{12})+\varphi(b_{21}) \in \A'$ there exists $t \in \A$ such that 
$\varphi(t) = \varphi(a_{12})+\varphi(b_{21})$, with $t=t_{11}+t_{12}+t_{21}+t_{22}$. Now, by Proposition \ref{Claim2}
$$
\begin{aligned}
\varphi\left(q_{n*}\left(e_1,...,e_1,\frac{1}{2^{n-1}}e_1,t\right)\right) &= \varphi\left(q_{n*}\left(e_1,...,e_1,\frac{1}{2^{n-1}}e_1,a_{12}\right)\right) \\
& +\varphi\left(q_{n*}\left(e_1,...,e_1,\frac{1}{2^{n-1}}e_1,b_{21}\right)\right) \\ 
&= \varphi(e_1a_{12} + a_{12}e_1) + \varphi(e_1b_{21} + b_{21}e_1) \\
&= \varphi(a_{12}) + \varphi(b_{21}) = \varphi(t).
\end{aligned}
$$
Since $\varphi$ is injective, $e_1t + te_1 = t,$ that is,
$2t_{11} + t_{12} + t_{21} = t_{11}+t_{12}+t_{21}+t_{22}.
$ Then $t_{11} = t_{22} = 0$.

Now, observe that, for $c_{12}\in \A_{12},$ $q_{n*}\left(e_2, \ldots, e_2,\dfrac{1}{2^{n-2}}e_2,t,c_{12}\right) = t_{21}c_{12}+t_{12}c_{12}+c_{12}t_{21}^*+c_{12}t_{12}^*.$ Thus, $q_{n*}\left(e_2,\ldots,e_2,\dfrac{1}{2^{n-2}}e_2,a_{12},c_{12}\right) = a_{12}c_{12}+c_{12}a_{12}^* \in \A_{11}+\A_{21}$ and $q_{n*}\left(e_2, \ldots,e_2, \dfrac{1}{2^{n-2}}e_2,b_{21},c_{12}\right)=b_{21}c_{12}+c_{12}b_{21}^* \in \A_{21}+\A_{22}.$ Moreover, 
\[q_{n*}\left(e_2, \ldots, e_2, \dfrac{1}{2^{n-2}}e_2, q_{n*}\left(e_2, \ldots, e_2,\dfrac{1}{2^{n-2}}e_2,t,c_{12}\right),e_2\right) =2(t_{21}c_{12}+(t_{21}c_{12})^*),
\]
thus 
\begin{align*}
\varphi& \left(q_{n*}\left(e_2,\ldots,e_2, \dfrac{1}{2^{n-2}}e_2,q_{n*}\left(e_2, \ldots,e_2, \dfrac{1}{2^{n-2}}e_2,t,c_{12}\right),e_2\right)\right) \\
=& \ \varphi\left(q_{n*}\left(e_2,\ldots,e_2,\dfrac{1}{2^{n-2}}e_2,q_{n*}\left(e_2, \ldots,e_2,\dfrac{1}{2^{n-2}}e_2,a_{12},c_{12}\right),e_2\right)\right)\\
& \ +\varphi\left(q_{n*}\left(e_2,\ldots,e_2,\dfrac{1}{2^{n-2}}e_2,q_{n*}\left(e_2, \ldots,e_2,\dfrac{1}{2^{n-2}}e_2,b_{21},c_{12}\right),e_2\right)\right) \\
=& \ \varphi(0) + \varphi(2(b_{21}c_{12}+(b_{21}c_{12})^*))=\varphi(2(b_{21}c_{12}+(b_{21}c_{12})^*)).
\end{align*}
Therefore, $(t_{21}-b_{21})c_{12}+c_{12}^*(t_{21}^*-b_{21}^*)=0,$ for all $c_{12}\in \A_{12}.$ Consider $ic_{12}\in \A_{12},$ we have $(t_{21}-b_{21})c_{12}-c_{12}^*(t_{21}^*-b_{21}^*)=0.$ Thus, $(t_{21}-b_{21})c_{12}=0$ implies $(t_{21}-b_{21})(\A e_2) = 0.$ Using ($\spadesuit$), we obtain $t_{21}-b_{21}=0,$ that is, $t_{21}=b_{21}.$ Note that, with similar calculations, if we replace $c_{12}$ by $c_{21}$ and $e_1$ by $e_2,$ obtain that $t_{12} = a_{12}.$
\end{proof}

\begin{lemma}\label{lema3}
For any $a_{11} \in \A_{11}$, $b_{12} \in \A_{12}$, $c_{21} \in \A_{21}$ and $d_{22} \in \A_{22}$ we have 
$$\varphi(a_{11} + b_{12} + c_{21}) = \varphi(a_{11}) + \varphi(b_{12})+ \varphi(c_{21})$$
and
$$\varphi(b_{12} + c_{21} + d_{22}) = \varphi(b_{12})+ \varphi(c_{21}) + \varphi(d_{22}).$$
\end{lemma}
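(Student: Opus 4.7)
The plan runs as follows. Using surjectivity of $\varphi$, choose $s \in \A$ with $\varphi(s) = \varphi(a_{11}) + \varphi(b_{12}) + \varphi(c_{21})$ and write $s = s_{11}+s_{12}+s_{21}+s_{22}$ in Peirce form. By Lemma~\ref{lema2} the right-hand side equals $\varphi(a_{11}) + \varphi(b_{12}+c_{21})$, which is the splitting I will feed repeatedly into Proposition~\ref{Claim2}. Showing that $s = a_{11}+b_{12}+c_{21}$ then gives $\varphi(a_{11}+b_{12}+c_{21}) = \varphi(s) = \varphi(a_{11})+\varphi(b_{12})+\varphi(c_{21})$, which is the first identity of the lemma.

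Three of the four Peirce components of $s$ come out easily. Applying Proposition~\ref{Claim2} with $t = e_2$ and the splitting variable $h = s$ placed in the last slot, the contribution of $x = a_{11}$ vanishes because $e_2 a_{11} = a_{11} e_2 = 0$. A direct calculation, combined with Lemma~\ref{Claim1} and injectivity of $\varphi$, reduces the resulting identity to $2^{n-2}(2 s_{22} + s_{12} + s_{21}) = 2^{n-2}(b_{12}+c_{21})$; comparing Peirce components yields $s_{22} = 0$, $s_{12} = b_{12}$ and $s_{21} = c_{21}$.

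The delicate step, and what I expect to be the main obstacle, is establishing $s_{11} = a_{11}$: operators of the form $q_{n*}(\xi,\ldots,\xi,\cdot,\xi')$ with $\xi,\xi' \in \{1_\A, e_1, e_2\}$ map $\A_{11}$ into itself, so on their own they cannot produce a relation in a different Peirce summand that would distinguish $s_{11}$ from $a_{11}$ under injectivity. My strategy is to introduce an arbitrary test element $d_{12} \in \A_{12}$ and apply Proposition~\ref{Claim2} twice. First, applied to $\varphi(s) = \varphi(a_{11}) + \varphi(b_{12}+c_{21})$ with $t = 1_\A$ and $z = d_{12}$, I obtain a new splitting $\varphi(M) = \varphi(A) + \varphi(B)$, where $A = 2^{n-2} a_{11} d_{12} \in \A_{12}$, the element $B$ lies in $\A_{11}+\A_{21}+\A_{22}$, and the $\A_{12}$-component of $M$ equals $2^{n-2} s_{11} d_{12}$. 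A second application of Proposition~\ref{Claim2} to this new splitting, now with $t = e_1$ and $z = e_2$, exploits the identity $q_{n*}(e_1,\ldots,e_1,X,e_2) = 2^{n-3}(X_{12} + X_{12}^*)$, valid for every $X \in \A$: this operator extracts only the $\A_{12}$-component and its adjoint, in particular annihilating $B$. Injectivity of $\varphi$ then produces $s_{11} d_{12} + d_{12}^* s_{11}^* = a_{11} d_{12} + d_{12}^* a_{11}^*$, and splitting both sides across the disjoint Peirce summands $\A_{12}$ and $\A_{21}$ forces $(s_{11} - a_{11}) d_{12} = 0$ for every $d_{12} \in \A_{12}$. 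Since $(s_{11}-a_{11})\A_{22} = \{0\}$ holds automatically, we deduce $(s_{11}-a_{11})\A e_2 = \{0\}$, and $(\spadesuit)$ delivers $s_{11} = a_{11}$.

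The second identity of the lemma, concerning $b_{12}+c_{21}+d_{22}$, is obtained by the completely symmetric argument under the swap $e_1 \leftrightarrow e_2$: an initial application of Proposition~\ref{Claim2} with $t = z = e_1$ pins down $s_{11} = 0$, $s_{12} = b_{12}$, $s_{21} = c_{21}$, and the remaining $\A_{22}$-component is then isolated by the same double application of Proposition~\ref{Claim2} with a test element $d_{21} \in \A_{21}$, invoking $(\spadesuit)$ in the form $x(\A e_1) = \{0\} \Rightarrow x = 0$.
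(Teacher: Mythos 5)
Your proof is correct, but it diverges from the paper's at precisely the step you flag as delicate. The first half --- pinning down $s_{22}=0$, $s_{12}=b_{12}$, $s_{21}=c_{21}$ via $q_{n*}(e_2,\ldots,e_2,\cdot)$ together with Lemma \ref{lema2} and injectivity --- is exactly the paper's argument. For $s_{11}=a_{11}$, however, the paper does not reach for a test element: it applies Proposition \ref{Claim2} with $z=e_1-e_2$ in the $(n-1)$-th slot (the proposition permits an arbitrary $z\in\A$, not only $z\in\{1_{\A},e_1,e_2\}$), using that
$q_{n*}(1_{\A},\ldots,1_{\A},e_1-e_2,x)=2^{n-2}\bigl((e_1-e_2)x+x(e_1-e_2)\bigr)$
annihilates $\A_{12}$ and $\A_{21}$ and returns $2(x_{11}-x_{22})$ on the diagonal part; since $t_{22}=0$ is already known, injectivity alone yields $t_{11}=a_{11}$. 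So the ``main obstacle'' you describe is an artifact of restricting the distinguished slots to $\{1_{\A},e_1,e_2\}$. Your workaround --- a double application of Proposition \ref{Claim2} with a test element $d_{12}$, extraction of the $\A_{12}$-component by $q_{n*}(e_1,\ldots,e_1,\cdot,e_2)=2^{n-3}(X_{12}+X_{12}^{*})$, splitting $(s_{11}-a_{11})d_{12}+\bigl((s_{11}-a_{11})d_{12}\bigr)^{*}=0$ across the direct sum $\A_{12}\oplus\A_{21}$, and finally $(\spadesuit)$ --- is valid: I checked the Peirce bookkeeping, including that the identification $M_{12}=2^{n-2}s_{11}d_{12}$ silently uses the already-established $s_{22}=0$, and that $B$ indeed has vanishing $\A_{12}$-component so its image under the extraction operator is $0=\varphi(0)$ by Lemma \ref{Claim1}. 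Your technique mirrors what the paper itself does in Lemma \ref{lema2}. What it costs is an extra invocation of the hypothesis $(\spadesuit)$ that the paper's proof of this particular lemma does not need; what it buys is nothing beyond that, so the paper's route is strictly more economical here. The symmetric treatment of the second identity is fine in both versions.
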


\begin{proof}
Since $\varphi$ is surjective, given $\varphi(a_{11})+\varphi(b_{12})+\varphi(c_{21}) \in \A'$ there exists $T \in \A$ such that 
$\varphi(t) = \varphi(a_{11})+\varphi(b_{12})+\varphi(c_{21})$, with $t=t_{11}+t_{12}+t_{21}+t_{22}$. Now, observing that
$q_{n*}(e_2,...,e_2,a_{11})=0$ and using Proposition \ref{Claim2} and Lemma \ref{lema2}, we obtain
$$
\begin{aligned}
\varphi(q_{n*}(e_2,...,e_2,t)) 
&= \varphi(q_{n*}(e_2,...,e_2,a_{11})) + \varphi(q_{n*}(e_2,...,e_2,b_{12})) \\
&+ \ \varphi(q_{n*}(e_2,...,e_2,c_{21})) \\ 
&= \ \varphi(q_{n*}(e_2,...,e_2,b_{12}) + q_{n*}(e_2,...,e_2,c_{21})).
\end{aligned}
$$
By injectivity of $\varphi$ we have $q_{n*}(e_2,...,e_2,t)=q_{n*}(e_2,...,e_2,b_{12}) + q_{n*}(e_2,...,e_2,c_{21}),$ that is,
$2t_{22} + t_{12} + t_{21} = b_{12} + c_{21}.$ Therefore, $t_{22}=0$, $t_{12}=b_{12}$ and $t_{21}=c_{21}$.
Again, observing that 
\[
q_{n*}(1_\A,...,1_\A,e_1-e_2,b_{12})=q_{n*}(1_\A,...,1_\A,e_1-e_2,c_{21})=0
\]
and using 
Proposition \ref{Claim2}, we obtain
$$
\begin{aligned}
\varphi(q_{n*}(1_\A,...,1_\A,e_1-e_2,t)) &= \varphi(q_{n*}(1_\A,...,1_\A,e_1-e_2,a_{11}))\\&+\varphi(q_{n*}(1_\A,...,1_\A,e_1-e_2,b_{12})) \\
&+ \varphi(q_{n*}(1_\A,...,1_\A,e_1-e_2,c_{21})) \\
&= \varphi(q_{n*}(1_\A,...,1_\A,e_1-e_2,a_{11})).
\end{aligned}
$$
By injectivity of $\varphi$ we have $q_{n*}(1_\A,...,1_\A,e_1-e_2,t) = q_{n*}(1_\A,...,1_\A,e_1-e_2,a_{11}),
$ that is,
$2t_{11} - 2t_{22} = 2a_{11}.$ Therefore, $t_{11}=a_{11}$.

The other identity we obtain in a similar way.
\end{proof}

\begin{lemma}\label{lema4}
For any $a_{11} \in \A_{11}$, $b_{12} \in \A_{12}$, $c_{21} \in \A_{21}$ and $d_{22} \in \A_{22}$ we have 
$$\varphi(a_{11} + b_{12} + c_{21} + d_{22}) = \varphi(a_{11}) + \varphi(b_{12}) + \varphi(c_{21}) + \varphi(d_{22}).$$
\end{lemma}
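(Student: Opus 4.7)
The plan is to mirror the pattern of the three previous lemmas: use surjectivity to pick $t\in\A$ with $\varphi(t)=\varphi(a_{11})+\varphi(b_{12})+\varphi(c_{21})+\varphi(d_{22})$, write the Peirce decomposition $t=t_{11}+t_{12}+t_{21}+t_{22}$, and then recover the four Peirce components one pair at a time by applying $\varphi$ to various products $q_{n*}(e_i,\ldots,e_i,t)$ and comparing. The only subtlety is that Proposition~\ref{Claim2} is stated for a two-summand decomposition of $\varphi(h)$, whereas we have four summands on the right, so we first need to regroup them into two.

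For that regrouping I would invoke Lemma~\ref{lema3} twice, rewriting
\[
\varphi(t)=\varphi(a_{11}+b_{12}+c_{21})+\varphi(d_{22})=\varphi(a_{11})+\varphi(b_{12}+c_{21}+d_{22}),
\]
so that Proposition~\ref{Claim2} is applicable under either grouping. Applying it with $\xi=e_1$ and the first grouping, the summand $\varphi(q_{n*}(e_1,\ldots,e_1,d_{22}))$ vanishes (by Lemma~\ref{Claim1}, since $e_1 d_{22}+d_{22}e_1=0$), and injectivity together with the identity $q_{n*}(e_1,\ldots,e_1,x)=2^{n-2}(e_1x+xe_1)$ yields
\[
2^{n-2}(2t_{11}+t_{12}+t_{21}) = 2^{n-2}(2a_{11}+b_{12}+c_{21}),
\]
which, component-by-component in the Peirce decomposition, gives $t_{11}=a_{11}$, $t_{12}=b_{12}$, $t_{21}=c_{21}$.

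Applying the proposition symmetrically with $\xi=e_2$ and the second grouping, the term $\varphi(q_{n*}(e_2,\ldots,e_2,a_{11}))$ vanishes for the same reason, and the resulting equation forces $t_{22}=d_{22}$ (and re-confirms $t_{12}=b_{12}$, $t_{21}=c_{21}$). Combining, $t=a_{11}+b_{12}+c_{21}+d_{22}$, which is exactly the desired $*$-additivity identity for four Peirce summands.

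I do not expect a genuine obstacle here: once Lemma~\ref{lema3} supplies three-term additivity, the argument is a routine application of Proposition~\ref{Claim2} in the same spirit as Lemma~\ref{lema1}, and the condition $(\spadesuit)$ is not needed at this stage (it was already consumed in Lemma~\ref{lema2}). The only point requiring mild care is verifying in each application that the auxiliary $q_{n*}$-term that we want to drop actually lies in the kernel of the Peirce projection being used, which is immediate from $e_i\A_{jj}+\A_{jj}e_i=0$ for $i\ne j$.
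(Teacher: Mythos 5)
Your proof is correct and essentially identical to the paper's: both isolate the Peirce components of the preimage $t$ by applying $\varphi$ to $q_{n*}(e_1,\ldots,e_1,t)$ and $q_{n*}(e_2,\ldots,e_2,t)$, using that the $q_{n*}(e_i,\ldots,e_i,\cdot)$ term on the opposite diagonal block vanishes, and then concluding by injectivity. The only cosmetic difference is that you pre-group the four summands into two via Lemma~\ref{lema3} before invoking Proposition~\ref{Claim2}, whereas the paper applies the proposition with all four summands and then uses Lemma~\ref{lema3} to recombine the three surviving images; as you note, $\left(\spadesuit\right)$ is not needed here in either version.
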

\begin{proof}
Since $\varphi$ is surjective, given $\varphi(a_{11})+\varphi(b_{12})+\varphi(c_{21})+\varphi(d_{22}) \in \A'$ there exists $T \in \A$ such that 
$\varphi(t) = \varphi(a_{11})+\varphi(b_{12})+\varphi(c_{21})+\varphi(d_{22})$, with $t=t_{11}+t_{12}+t_{21}+t_{22}$. Now, observing that
$q_{n*}(e_1,...,e_1,e_{22})=0$ and using Proposition \ref{Claim2} and Lemma \ref{lema3}, we obtain
$$
\begin{aligned}
\varphi(q_{n*}(e_1,...,e_1,t)) &= \varphi(q_{n*}(e_1,...,e_1,a_{11})) + \varphi(q_{n*}(e_1,...,e_1,b_{12})) \\
&+ \varphi(q_{n*}(e_1,...,e_1,c_{21})) + \varphi(q_{n*}(e_1,...,e_1,d_{22})) \\ 
&= \varphi(q_{n*}(e_1,...,e_1,a_{11})) + \varphi(q_{n*}(e_1,...,e_1,b_{12})) \\
&+ \varphi(q_{n*}(e_1,...,e_1,c_{21})) \\
&= \varphi(q_{n*}(e_1,...,e_1,a_{11}) + q_{n*}(e_1,...,e_1,b_{12}) + q_{n*}(e_1,...,e_1,c_{21})).
\end{aligned}
$$
By injectivity of $\varphi$ we have $q_{n*}(e_1,...,e_1,t) = q_{n*}(e_1,...,e_1,a_{11}) + q_{n*}(e_1,...,e_1,b_{12}) + q_{n*}(e_1,...,e_1,c_{21}),$ that is, $2t_{11} + t_{12} + t_{21} = 2a_{11} + b_{12} + c_{21}.$ Therefore, $t_{11}=a_{11}$, $t_{12}=b_{12}$ and $t_{21}=c_{21}$.

In a similar way, using $q_{n*}(e_2,...,e_2,t)$, we obtain
$2t_{22} + t_{12} + t_{21} = 2d_{22} + b_{12} + c_{21}$ and then $t_{22}=d_{22}$.
\end{proof}


\begin{lemma}\label{lema5.1}
For every $a_{12}, b_{12} \in \A_{12}$ and $c_{21}, d_{21} \in \A_{21}$ we have  
$$\varphi(a_{12}b_{12}+a^{*}_{
12}) = \varphi(a_{12}b_{12})+\varphi(a^{*}_{12})$$
and
$$\varphi(c_{21}d_{21}+c^{*}_{
21}) = \varphi(c_{21}d_{21})+\varphi(c^{*}_{
21}).$$
\end{lemma}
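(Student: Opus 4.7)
The plan is to exhibit a single $q_{n*}$-expression whose expansion places the two terms $a_{12}b_{12}$ and $a_{12}^*$ together as its $\A_{21}$-Peirce component while the remaining summands sit in distinct Peirce components, so that Lemma~\ref{lema4} isolates the $\A_{21}$-part. The crucial identity is
\[
\{a_{12},\, b_{12}+e_2\}_* = a_{12}b_{12} + a_{12} + b_{12}a_{12}^* + a_{12}^*,
\]
whose Peirce splitting is $a_{12}b_{12}+a_{12}^*\in \A_{21}$, $a_{12}\in\A_{12}$, $b_{12}a_{12}^*\in\A_{11}$, and $0\in\A_{22}$. Since Lemma~\ref{lema4} already gives $\varphi(b_{12}+e_2)=\varphi(b_{12})+\varphi(e_2)$, Proposition~\ref{Claim2} (with $t=1_\A$, $z=a_{12}$, $h=b_{12}+e_2$, $x=b_{12}$, $y=e_2$) yields
\[
\varphi\bigl(q_{n*}(1_\A,\ldots,1_\A,a_{12},b_{12}+e_2)\bigr) = \varphi\bigl(q_{n*}(1_\A,\ldots,1_\A,a_{12},b_{12})\bigr)+\varphi\bigl(q_{n*}(1_\A,\ldots,1_\A,a_{12},e_2)\bigr).
\]

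A direct induction shows $q_{(n-1)*}(1_\A,\ldots,1_\A,a_{12})=2^{n-2}a_{12}$, so the three $q_{n*}$-expressions above evaluate respectively to $2^{n-2}(a_{12}b_{12}+a_{12}+b_{12}a_{12}^*+a_{12}^*)$, $2^{n-2}(a_{12}b_{12}+b_{12}a_{12}^*)$, and $2^{n-2}(a_{12}+a_{12}^*)$. I will then unpack each $\varphi$-value using the prior additivity lemmas: Lemma~\ref{lema4} splits the left-hand $\varphi$ into $\varphi(2^{n-2}b_{12}a_{12}^*)+\varphi(2^{n-2}a_{12})+\varphi(2^{n-2}(a_{12}b_{12}+a_{12}^*))$ (treating the two $\A_{21}$-summands as a single entry); Lemma~\ref{lema4} splits the first right-hand $\varphi$ into $\varphi(2^{n-2}a_{12}b_{12})+\varphi(2^{n-2}b_{12}a_{12}^*)$; and Lemma~\ref{lema2} splits the second into $\varphi(2^{n-2}a_{12})+\varphi(2^{n-2}a_{12}^*)$. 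Cancelling $\varphi(2^{n-2}b_{12}a_{12}^*)$ and $\varphi(2^{n-2}a_{12})$ leaves
\[
\varphi\bigl(2^{n-2}(a_{12}b_{12}+a_{12}^*)\bigr)=\varphi(2^{n-2}a_{12}b_{12})+\varphi(2^{n-2}a_{12}^*),
\]
and replacing $a_{12}$ by $2^{-(n-2)}a_{12}\in\A_{12}$ (permissible since the ground field is $\mathbb{C}$) removes the scalar and produces the first identity.

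The second identity follows by the mirror argument using $q_{n*}(1_\A,\ldots,1_\A,c_{21},d_{21}+e_1)$; its expansion separates $c_{21}d_{21}+c_{21}^*\in\A_{12}$, $c_{21}\in\A_{21}$, and $d_{21}c_{21}^*\in\A_{22}$ into distinct Peirce slots, and the same cancellation scheme applies. I expect the main friction to be the careful tracking of the $2^{n-2}$ factors and the verification that $a_{12}b_{12}+a_{12}^*$ (respectively $c_{21}d_{21}+c_{21}^*$) may legitimately be treated by Lemma~\ref{lema4} as a single Peirce entry.
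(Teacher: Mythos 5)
Your proof is correct and follows essentially the same route as the paper: both expand $q_{n*}(1_\A,\ldots,1_\A,a_{12},e_2+b_{12})$ (up to normalization), split the four Peirce summands with Lemmas~\ref{lema2} and~\ref{lema4}, and cancel the common terms $\varphi(a_{12})$ and $\varphi(b_{12}a_{12}^*)$. The only difference is cosmetic: the paper absorbs the factor $2^{n-2}$ by using $\tfrac{1}{2^{n-2}}(e_2+b_{12})$ as the last argument, whereas you carry the scalar through and remove it at the end by rescaling $a_{12}$.
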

\begin{proof}
Since 
$q_{n*}\left(1_{\A},...,1_{\A},a_{12}, \dfrac{1}{2^{n-2}}(e_2+b_{12})\right) = a_{12}+a_{12}b_{12} + a_{12}^*+b_{12}a_{12}^*$
we get from Lemma \ref{lema4} that
\begin{align*}
\varphi(a_{12})+ & \varphi(a_{12}b_{12} + a^{*}_{12}) + \varphi(b_{12}a^{*}_{12})\\ 
=& \ \varphi(a_{12} + a_{12}b_{12} + a^{*}_{12} + b_{12}a^{*}_{12}) \\
=& \ \varphi\left( q_{n*}\left(1_{\A},...,1_{\A},a_{12}, \dfrac{1}{2^{n-2}}(e_2+b_{12})\right) \right) \\
=& \ \left( q_{n*}\left(\varphi(1_{\A}),...,\varphi(1_{\A}), \varphi(a_{12}), \varphi(e_2) + \varphi\left(\dfrac{1}{2^{n-2}}b_{12}\right) \right)\right) \\
=& \ \varphi(a_{12}) + \varphi(a_{12}^*) + \varphi(a_{12}b_{12}) + \varphi(b_{12}a_{12}^*),
\end{align*}
which implies $\varphi(a_{12}b_{12} + a^{*}_{12}) = \varphi(a_{12}b_{12}) + \varphi(a^{*}_{12}).$
Similarly, we prove the other case using the identity
$q_{n*}\left(1_{\A},...,1_{\A},c_{21}, \dfrac{1}{2^{n-2}}(e_1+d_{21})\right) = c_{21}+c_{21}d_{21} + c_{21}^*+d_{21}c_{21}^*$
\end{proof}

\begin{lemma}\label{lema5.2}
For all $a_{ij}, b_{ij} \in \A_{ij},$ $1 \leq i \neq j \leq 2$, we have
$$\varphi(a_{ij} + b_{ij}) = \varphi(a_{ij}) + \varphi(b_{ij}).$$
\end{lemma}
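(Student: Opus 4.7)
The strategy is to mimic Lemma~\ref{lema2}, which handled mixed pairs $(a_{12},b_{21})$, and adapt it to same-component pairs $(a_{ij},b_{ij})$. By symmetry I would treat only $a_{12},b_{12}\in\A_{12}$. Using surjectivity of $\varphi$, pick $w\in\A$ with $\varphi(w)=\varphi(a_{12})+\varphi(b_{12})$ and Peirce-decompose $w=w_{11}+w_{12}+w_{21}+w_{22}$; the goal is to force $w=a_{12}+b_{12}$, after which injectivity delivers the lemma.

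First I would kill the diagonal components. Applying Proposition~\ref{Claim2} to the operator $q_{n*}(e_1,\ldots,e_1,\tfrac{1}{2^{n-2}}e_1,\,\cdot\,)$ — which collapses to $e_1(\cdot)+(\cdot)e_1$ — and using $e_1 a_{12}+a_{12}e_1=a_{12}$ (and likewise for $b_{12}$), one obtains $\varphi(e_1 w+w e_1)=\varphi(w)$; injectivity gives $e_1w+we_1=w$, and matching Peirce components forces $w_{11}=w_{22}=0$, so $w=w_{12}+w_{21}$.

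Next I would kill $w_{21}$ via the two-level nesting of Lemma~\ref{lema2}: for any $c_{12}\in\A_{12}$ define
\[
\Psi(x,c_{12}):=q_{n*}\!\bigl(e_2,\ldots,\tfrac{1}{2^{n-2}}e_2,\,q_{n*}(e_2,\ldots,\tfrac{1}{2^{n-2}}e_2,x,c_{12}),\,e_2\bigr).
\]
A careful Peirce-accounting (parallel to Lemma~\ref{lema2}) shows $\Psi(a_{12},c_{12})=\Psi(b_{12},c_{12})=0$, because the inner expression for an $\A_{12}$-input lies entirely in $\A_{11}+\A_{21}$ and the outer projection kills it, while for the input $w$ one gets $\Psi(w,c_{12})=2\bigl(w_{21}c_{12}+(w_{21}c_{12})^*\bigr)$. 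Two successive applications of Proposition~\ref{Claim2} together with Lemma~\ref{Claim1} then yield $\varphi\bigl(2(w_{21}c_{12}+(w_{21}c_{12})^*)\bigr)=0$; injectivity plus the substitution $c_{12}\mapsto ic_{12}$ (as in Lemma~\ref{lema2}) give $w_{21}c_{12}=0$ for every $c_{12}\in\A_{12}$. Combined with $w_{21}\A_{22}=0$ (property~(iii)) this forces $w_{21}(\A e_2)=0$, and $(\spadesuit)$ concludes $w_{21}=0$; hence $w=w_{12}$.

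The last step is to show $w_{12}=a_{12}+b_{12}$. Running the construction symmetrically with $c_{21}\in\A_{21}$ in place of $c_{12}$ and the $e_1$-indexed outer projection in place of the $e_2$-indexed one, two applications of Proposition~\ref{Claim2} produce
\[
\varphi\bigl(2(w_{12}c_{21}+(w_{12}c_{21})^*)\bigr)=\varphi\bigl(2(a_{12}c_{21}+(a_{12}c_{21})^*)\bigr)+\varphi\bigl(2(b_{12}c_{21}+(b_{12}c_{21})^*)\bigr).
\]
Using the already-established additivity on mixed Peirce sums (Lemmas~\ref{lema1}--\ref{lema4}) together with Lemma~\ref{lema5.1} to consolidate the right-hand side into the single $\varphi$-value at $2\bigl((a_{12}+b_{12})c_{21}+((a_{12}+b_{12})c_{21})^*\bigr)$, injectivity yields $(w_{12}-a_{12}-b_{12})c_{21}+((w_{12}-a_{12}-b_{12})c_{21})^*=0$, and the $ic_{21}$-substitution extracts $(w_{12}-a_{12}-b_{12})c_{21}=0$ for every $c_{21}\in\A_{21}$. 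Since $(w_{12}-a_{12}-b_{12})\A_{11}=0$ automatically by property~(iii), we deduce $(w_{12}-a_{12}-b_{12})(\A e_1)=0$, and $(\spadesuit)$ finishes.

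The principal obstacle is the consolidation inside the final step: each $a_{12}c_{21}+(a_{12}c_{21})^*$ is a self-adjoint element of $\A_{11}$, so the sum of its $\varphi$-image with the analogous $b_{12}$-expression does not split or combine via Lemmas~\ref{lema1}--\ref{lema4} alone (those only handle sums across \emph{distinct} Peirce blocks). Lemma~\ref{lema5.1}, with its additivity for combinations of the form $xy+x^*$, is tailor-made to bridge this gap, and tracking the Peirce bookkeeping through two layers of nesting — complicated by property~(ii), which re-injects $\A_{ij}\A_{ij}$-products into $\A_{ji}$ rather than killing them as in the associative case — is where the computation is most fragile.
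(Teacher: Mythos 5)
Your first two steps are sound and faithfully adapt the machinery of Lemma~\ref{lema2}: producing $w$ with $\varphi(w)=\varphi(a_{12})+\varphi(b_{12})$, killing $w_{11}$ and $w_{22}$ with the operator $e_1(\cdot)+(\cdot)e_1$, and killing $w_{21}$ with the nested expression $\Psi$, the $ic_{12}$ substitution and $(\spadesuit)$ all check out. The gap is exactly where you locate it, and Lemma~\ref{lema5.1} does not bridge it. After two applications of Proposition~\ref{Claim2} you arrive at
\[
\varphi\bigl(2(w_{12}c_{21}+(w_{12}c_{21})^*)\bigr)=\varphi\bigl(2(a_{12}c_{21}+(a_{12}c_{21})^*)\bigr)+\varphi\bigl(2(b_{12}c_{21}+(b_{12}c_{21})^*)\bigr),
\]
and to invoke injectivity you must rewrite the right-hand side as a single value of $\varphi$. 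But the arguments of both summands lie entirely in $\A_{11}$ (each has the form $u+u^*$ with $u\in\A_{12}\A_{21}\subseteq\A_{11}$), so what you need is additivity of $\varphi$ on the diagonal block $\A_{11}$ --- which is Lemma~\ref{lema6}, proved \emph{after} and \emph{by means of} the present lemma. Lemma~\ref{lema5.1} is of no help here: it splits $\varphi(xy+x^*)$ only for $x,y$ in the same off-diagonal corner, i.e.\ it is a special additivity statement inside $\A_{ji}$ for pairs of the particular shape $(xy,\,x^*)$, and the two $\A_{11}$ elements above are not of that shape. The obstruction is structural rather than computational: every test $\varphi(F(w))=\varphi(F(a_{12}))+\varphi(F(b_{12}))$ produced by Proposition~\ref{Claim2} applies the \emph{same} operator $F$ to $a_{12}$ and $b_{12}$, so $F(a_{12})$ and $F(b_{12})$ land in the same Peirce components and the right-hand side can never be consolidated without already knowing additivity on a single component.

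The paper escapes this circle by abandoning the surjectivity scheme for this lemma and instead evaluating $q_{n*}\bigl(1_{\A},\ldots,1_{\A},\tfrac{1}{2^{n-2}}(e_i+a_{ij}),\,e_j+b_{ij}\bigr)$, which embeds $a_{ij}$ and $b_{ij}$ \emph{asymmetrically}, one attached to $e_i$ and the other to $e_j$. Its expansion equals $(a_{ij}+b_{ij})+(a_{ij}b_{ij}+a_{ij}^*)+b_{ij}a_{ij}^*$, a sum of terms lying in the three distinct blocks $\A_{ij}$, $\A_{ji}$, $\A_{ii}$, so Lemma~\ref{lema4} splits the left-hand side, multilinearity of $q_{n*}$ plus the hypothesis expands the right-hand side, and the single same-block pair $a_{ij}b_{ij}+a_{ij}^*$ is handled by Lemma~\ref{lema5.1} --- which is precisely the one job that lemma was designed for. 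You would need to replace your final step by an argument of this kind.
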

\begin{proof}
Since
$q_{n*}\left(1_{\A},...,1_{\A},\dfrac{1}{2^{n-2}}(e_i+a_{ij}), e_j+b_{ij}\right) = a_{ij} + b_{ij} + a_{ij}^* + a_{ij}b_{ji}+b_{ij}a_{ij}^*$
we get from Lemma \ref{lema5.1} that
\begin{align*}
\varphi & (a_{ij}+b_{ij}) \varphi(a_{ij}^* + a_{ij}b_{ij}) + \varphi(b_{ij}a_{ij}^*) \\
=& \ \varphi \left( q_{n*}(1_{\A},...,1_{\A},\dfrac{1}{2^{n-2}}(e_i+a_{ij}),e_j+b_{ij}) \right)\\
=& \  q_{n*}\left(\varphi(1_{\A}),...,\varphi(1_{\A}),\varphi\left(\dfrac{1}{2^{n-2}}(e_i+a_{ij})\right), \varphi(e_j+b_{ij}) \right)\\
=& \  q_{n*}\left(\varphi(1_{\A}),...,\varphi(1_{\A}),\varphi(e_i)\varphi\left(\dfrac{1}{2^{n-2}}a_{ij}\right), \varphi(e_j)+\varphi(b_{ij}) \right)\\
=& \ \varphi(b_{ij}) + \varphi(a_{ij} + a_{ij}^*) +  \varphi(a_{ij}b_{ij}+b_{ij}a_{ij}^*)\\ 
=& \ \varphi(b_{ij}) + \varphi(a_{ij}) + \varphi(a_{ij}^*) + \varphi(a_{ij}b_{ij}) + \varphi(b_{ij}a_{ij}^*), 
\end{align*}
which implies $\varphi(a_{ij} + b_{ij}) = \varphi(b_{ij}) + \varphi(a_{ij})$.
\end{proof}

\begin{lemma}\label{lema6}
For all $a_{ii}, b_{ii} \in \A_{ii}$, we have $\varphi(a_{ii} + b_{ii}) = \varphi(a_{ii}) + \varphi(b_{ii})$ for $i \in \left\{1,2\right\}.$
\end{lemma}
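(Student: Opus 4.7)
The plan is to follow the pattern of Lemma \ref{lema1}: use surjectivity to pick a preimage of $\varphi(a_{ii}) + \varphi(b_{ii})$, Peirce-decompose it, and identify each component in turn. So pick $t = t_{11} + t_{12} + t_{21} + t_{22} \in \A$ with $\varphi(t) = \varphi(a_{ii}) + \varphi(b_{ii})$; the goal is to show $t = a_{ii} + b_{ii}$.

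First, take $j \in \{1,2\}$ with $j \neq i$ and apply Proposition \ref{Claim2} (in the one-slot version used inside Lemma \ref{lema1}) with $\xi = e_j$. Since $q_{n*}(e_j, \ldots, e_j, x) = 2^{n-2}(e_j x + x e_j) = 0$ for every $x \in \A_{ii}$ by the Peirce rules, the right-hand side collapses to $\varphi(0) = 0$ by Lemma \ref{Claim1}. Injectivity of $\varphi$ then forces $2t_{jj} + t_{12} + t_{21} = 0$, which by the directness of the Peirce sum yields $t_{jj} = t_{12} = t_{21} = 0$. Hence $t = t_{ii}$.

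The heart of the argument is pinning down $t_{ii}$. For this I would probe with an arbitrary $c_{ij} \in \A_{ij}$ using the two-slot version of Proposition \ref{Claim2}. The key Peirce computation --- the one place where any care is required --- is that for every $x \in \A_{ii}$,
\[
q_{n*}(e_i, \ldots, e_i, x, c_{ij}) \;=\; 2^{n-2}\bigl(x c_{ij} + c_{ij} x^{*}\bigr) \;=\; 2^{n-2}\, x c_{ij},
\]
because $c_{ij} x^{*} \in \A_{ij}\A_{ii} = 0$ by rule $(iii)$. Since $a_{ii} c_{ij}$ and $b_{ii} c_{ij}$ both lie in $\A_{ij}$, additivity on $\A_{ij}$ (Lemma \ref{lema5.2}) allows the two summands on the right to be merged inside $\varphi$, and injectivity of $\varphi$ then gives $(t_{ii} - a_{ii} - b_{ii})\, c_{ij} = 0$ for every $c_{ij} \in \A_{ij}$.

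Setting $u := t_{ii} - a_{ii} - b_{ii} \in \A_{ii}$, the Peirce rules also give $u \A_{jj} = 0$, so $u(\A e_j) = u \A_{ij} + u \A_{jj} = 0$. Hypothesis $(\spadesuit)$ then forces $u = 0$, i.e. $t_{ii} = a_{ii} + b_{ii}$, whence $\varphi(a_{ii} + b_{ii}) = \varphi(t) = \varphi(a_{ii}) + \varphi(b_{ii})$. I expect the only mild obstacle to be the verification of the reduced form of $q_{n*}(e_i, \ldots, e_i, x, c_{ij})$ for $x \in \A_{ii}$; once the vanishing of the $c_{ij} x^{*}$ term is observed, the remainder is a direct adaptation of the technique already used in the earlier lemmas.
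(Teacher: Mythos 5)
Your proposal is correct and follows the paper's own argument essentially step for step: surjectivity to get a preimage $t$, the probe with $q_{n*}(e_j,\ldots,e_j,t)$ to kill the components outside $\A_{ii}$, the probe with $c_{ij}$ together with Lemma \ref{lema5.2} and injectivity to get $(t_{ii}-a_{ii}-b_{ii})c_{ij}=0$, and finally $(\spadesuit)$. You are in fact a bit more explicit than the paper in checking that the $c_{ij}x^{*}$ terms vanish and that $u(\A e_j)=0$ really follows, which is a welcome clarification rather than a deviation.
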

\begin{proof}
Since $\varphi$ is surjective, given $\varphi(a_{ii})+\varphi(b_{ii}) \in \A'$, $i=1,2$, there exists $t \in \A$ such that 
$\varphi(t) = \varphi(a_{ii})+\varphi(b_{ii})$, with $t=t_{11}+t_{12}+t_{21}+t_{22}$. By Proposition \ref{Claim2}, for $j \neq i$,
$$
\varphi(q_{n*}(e_j,...,e_j,t)) = \varphi(q_{n*}(e_j,...,e_j,a_{ii})) + \varphi(q_{n*}(e_j,...,e_j,b_{ii})) = 0.
$$
Then, $t_{ij}=t_{ji}=t_{jj}=0$. We just have to show that $t_{ii} = a_{ii} + b_{ii}$. Given $c_{ij} \in \A_{ij}$, using Lemma \ref{lema5.2} and  
Proposition \ref{Claim2} we have
$$
\begin{aligned}
\varphi(q_{n*}(e_i,...,e_i,t,c_{ij})) &= \varphi(q_{n*}(e_i,...,e_i,a_{ii},c_{ij}))+\varphi(q_{n*}(e_i,...,e_i,b_{ii},c_{ij})) \\
                                      &= \varphi(q_{n*}(e_i,...,e_i,a_{ii},c_{ij}) + q_{n*}(e_i,...,e_i,b_{ii},c_{ij})).
\end{aligned}
$$
By injectivity of $\varphi$ we obtain
$$
q_{n*}(e_i,...,e_i,t,c_{ij}) = q_{n*}(e_i,...,e_i,a_{ii},c_{ij}) + q_{n*}(e_i,...,e_i,b_{ii},c_{ij}),
$$
that is,
$$
(t_{ii} - a_{ii} - b_{ii})c_{ij} = 0.
$$
Finally, by $\left(\spadesuit\right)$ we conclude that $t_{ii} = a_{ii} + b_{ii}$.
\end{proof}

Now we are able to show that $\varphi$ preserves $*$-addition.

Using Lemmas \ref{lema4}, \ref{lema5.2}, \ref{lema6} it is easy to see that $\varphi$ is additive.
Besides, on the one hand, since $\varphi$ is additive it follows that 
$$\varphi(a + a^{*}) = \varphi(a) + \varphi(a^{*}).$$
On the other hand, by additivity of $\varphi$,
$$
\begin{aligned}
2^{n-2}\varphi(a + a^{*}) &= \varphi(2^{n-2}(a + a^{*}))  = \varphi(q_{n*}(1_{\A},...,1_{\A},a,1_{\A})) \\
               &= q_{n*}(1_{\A'},...,1_{\A'},\varphi(a),1_{\A'}) = 2^{n-2}(\varphi(a) + \varphi(a)^{*}).
\end{aligned}
$$
Therefore $\varphi(a^{*}) = \varphi(a)^{*}$ and Theorem \ref{mainthm1} is proved.

\vspace{0,5cm}
Now we focus our attention on investigate the problem of when $\varphi$ is a $*$-ring isomorphism. We prove the following result:

\begin{theorem}\label{mainthm2}
Let $\A$ and $\A'$ be two alternative $*$-algebras with identities $1_{\A}$ and $1_{\A'}$, respectively, and $e_1$ and $e_2 = 1_{\A} - e_1$ nontrivial symmetric idempotents in $\A$. Suppose that $\A$ and $\A'$ satisfy:
\begin{eqnarray*}
  &&\left(\spadesuit\right) \ \ \  \ \ \  x (\A e_i) = \left\{0\right\} \ \ \  \mbox{implies} \ \ \ x = 0 
  \\
  \mbox{ and}
	\\&& \left(\clubsuit \right) \ \ \  \ y (\A'\varphi(e_i)) = \left\{0\right\} \ \ \  \mbox{implies} \ \ \ y = 0.
\end{eqnarray*}
If $\varphi: \A \rightarrow \A'$ is a bijective unital map which satisfies
\[
\varphi(q_{n*}(\xi, \ldots, \xi, a ,b)) = q_{n*}(\varphi(\xi), \ldots, \varphi(\xi),\varphi(a),\varphi(b)),
\]
 for all $a, b \in \A$ and $e \in \left\{1_{\A},e_1, e_2\right\}$ then $\varphi$ is $*$-ring isomorphism.
 
\end{theorem}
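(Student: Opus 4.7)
By Theorem \ref{mainthm1}, $\varphi$ is already $*$-additive, bijective, and unital, so the remaining task is to establish multiplicativity $\varphi(xy) = \varphi(x)\varphi(y)$ for all $x, y \in \A$. The plan is to transport the Peirce structure of $\A$ through $\varphi$ to $\A'$, obtain multiplicativity on all ``cross'' Peirce-component pairs by direct Peirce separation of the hypothesis, and close the one remaining diagonal case $\A_{ii} \cdot \A_{ii}$ using $(\clubsuit)$. First, applying the hypothesis with $\xi = 1_{\A}$, $a = b = e_i$ forces $p_i := \varphi(e_i)$ to satisfy $p_i^2 = p_i$; combined with self-adjointness $p_i^* = \varphi(e_i^*) = p_i$ and unitality $p_1 + p_2 = 1_{\A'}$, $\A'$ acquires a compatible Peirce decomposition $\A' = \mathfrak{A}'_{11} \oplus \mathfrak{A}'_{12} \oplus \mathfrak{A}'_{21} \oplus \mathfrak{A}'_{22}$ with $\mathfrak{A}'_{ij} = p_i \A' p_j$. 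Next, to show $\varphi(\A_{ij}) \subseteq \mathfrak{A}'_{ij}$, I would expand $\varphi(a_{ij})$ in this Peirce decomposition and apply the hypothesis with $\xi \in \{e_1, e_2\}$ at various slots: for $a_{11} \in \A_{11}$, a single identity ($\xi = e_2$, $a = e_2$, $b = a_{11}$) yields $p_2 \varphi(a_{11}) + \varphi(a_{11}) p_2 = 0$ and hence $\varphi(a_{11}) \in \mathfrak{A}'_{11}$; for $a_{12} \in \A_{12}$, two identities (first $\xi = e_1$, $a = e_1$, $b = a_{12}$, then $\xi = e_1$, $a = a_{12}$, $b = e_1$, exploiting $a_{12} e_1 + e_1 a_{12}^* = 0$) eliminate all off-diagonal components. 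Bijectivity together with $*$-additivity then upgrade each inclusion to an equality $\varphi(\A_{ij}) = \mathfrak{A}'_{ij}$.

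For every pair $(a, b) \in \A_{ij} \times \A_{kl}$ such that the two summands $ab$ and $b a^*$ of $q_{n*}(e_m, \ldots, e_m, a, b) = 2^{n-3}[(e_m a + a e_m) b + b (e_m a + a e_m)^*]$ fall in distinct Peirce components of $\A$ for a judicious choice of $m \in \{1, 2\}$, the identity $\varphi(q_{n*}(e_m, \ldots, e_m, a, b)) = q_{n*}(p_m, \ldots, p_m, \varphi(a), \varphi(b))$ splits by Peirce on both sides into two independent scalar identities, from which $\varphi(ab) = \varphi(a)\varphi(b)$ is read off directly. Cases where $ab = 0$ by Peirce rule $(iii)$ are automatic, as the same rule gives $\varphi(a)\varphi(b) = 0$ in $\A'$ thanks to $\varphi(\A_{ij}) = \mathfrak{A}'_{ij}$. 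This establishes multiplicativity on every Peirce-component pair except the diagonal case $a, b \in \A_{ii}$.

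For the diagonal case $a, b \in \A_{ii}$, both $ab$ and $b a^*$ lie in $\A_{ii}$, so the hypothesis yields only the Jordan-type identity $\varphi(ab) + \varphi(b a^*) = \varphi(a)\varphi(b) + \varphi(b)\varphi(a)^*$. Set $D := \varphi(ab) - \varphi(a)\varphi(b) \in \mathfrak{A}'_{ii}$ and test $D$ against $\varphi(c_{ij})$ for an arbitrary $c_{ij} \in \A_{ij}$. Using the cross-component multiplicativity obtained above together with the vanishing of the associator $(a_{ii}, b_{ii}, c_{ij})$ in both $\A$ and $\A'$---a property of alternative algebras with a symmetric idempotent that follows from the full alternation of associators combined with $c_{ij} a_{ii} \in \A_{ij} \A_{ii} = 0$---the product $D \varphi(c_{ij})$ unfolds to $\varphi((ab) c_{ij}) - \varphi((ab) c_{ij}) = 0$. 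Since $D \in \mathfrak{A}'_{ii}$ automatically annihilates $\mathfrak{A}'_{jj}$ on the right, this yields $D \cdot \A' p_j = 0$, and $(\clubsuit)$ forces $D = 0$. Combining the Peirce-component multiplicativity with $*$-additivity then yields $\varphi(xy) = \varphi(x)\varphi(y)$ on all of $\A$. The main obstacle is this diagonal case: Peirce separation cannot decouple the Jordan-type identity there, and the upgrade to full multiplicativity requires simultaneously invoking the cross-component multiplicativity, the vanishing of alternative associators restricted to Peirce components, and the non-degeneracy hypothesis $(\clubsuit)$.
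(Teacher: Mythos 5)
Your proposal is correct and follows essentially the same route as the paper's proof: additivity via Theorem \ref{mainthm1}, the idempotents $f_i=\varphi(e_i)$ and the Peirce correspondence $\varphi(\mathfrak{A}_{ij})=\mathfrak{A}'_{ij}$ (Lemmas \ref{lemam0}--\ref{lemam1}), direct extraction of multiplicativity from the $q_{n*}$ identity on the cross components (Lemmas \ref{lemam2}, \ref{lemam4}, \ref{lemam6}), and the test-against-$\varphi(c_{ij})$ argument with the associator identity and $\left(\clubsuit\right)$ for the diagonal case (Lemma \ref{lemam3}). The only (minor) divergence is that the paper also invokes $\left(\clubsuit\right)$ for the case $\mathfrak{A}_{ij}\mathfrak{A}_{jj}$ (Lemma \ref{lemam5}), whereas your Peirce separation of the two summands $a_{ij}b_{jj}\in\mathfrak{A}_{ij}$ and $b_{jj}a_{ij}^{*}\in\mathfrak{A}_{ji}$ disposes of it directly; this is a slight streamlining, not a different method.
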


Again it is easy to see that prime alternative algebras over ground field of characteristic different $2,3$ satisfy $\left(\spadesuit\right)$ and $\left(\clubsuit \right)$ hence we get

\begin{corollary}
Let $\A$ and $\A'$ be two prime alternative $*$-algebras with identities $1_{\A}$ and $1_{\A'}$, respectively, and $e_1$ and $e_2 = 1_{\A} - e_1$ nontrivial symmetric idempotents in $\A$. If $\varphi: \A \rightarrow \A'$ is a bijective unital map which satisfies
\[
\varphi(q_{n*}(\xi, \ldots,\xi, a,b)) = q_{n*}(\varphi(\xi),\ldots,\varphi(\xi),\varphi(a),\varphi(b))
\]
for all $a, b \in \A$ and $\xi \in \left\{1_{\A}, e_1, e_2\right\},$ then $\varphi$ is $*$-ring isomorphism.

\end{corollary}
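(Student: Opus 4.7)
By Theorem \ref{mainthm1}, $\varphi$ is already $*$-additive, so it remains only to prove multiplicativity, i.e.\ $\varphi(ab) = \varphi(a)\varphi(b)$ for all $a, b \in \A$. The plan is to combine the basic $*$-Jordan identity arising from $\xi = 1_{\A}$ with a Peirce-decomposition analysis on $\A'$, and to invoke the new hypothesis $(\clubsuit)$ only at the single remaining step where Peirce separation fails.

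First, instantiating the hypothesis at $\xi = 1_{\A}$ and using $\varphi(1_{\A}) = 1_{\A'}$ together with $*$-additivity yields the basic identity
\[
\varphi(ab + ba^{*}) = \varphi(a)\varphi(b) + \varphi(b)\varphi(a)^{*} \qquad (a, b \in \A).
\]
Setting $a = b = e_{i}$ (and recalling $\varphi(e_{i})^{*} = \varphi(e_{i})$, since $e_{i}^{*} = e_{i}$) shows that $f_{i} := \varphi(e_{i})$ are symmetric idempotents with $f_{1}+f_{2} = 1_{\A'}$. This yields a Peirce decomposition $\A' = \A'_{11} \oplus \A'_{12} \oplus \A'_{21} \oplus \A'_{22}$ with $\A'_{ij} = f_{i}\A' f_{j}$. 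A standard Peirce-wise analysis, using instances of the hypothesis at $\xi = e_{i}$ and working inside the associative subalgebras generated by pairs $\{f_{i},\varphi(a_{kl})\}$ (Artin's theorem), shows $\varphi(\A_{ij}) \subseteq \A'_{ij}$ for each $i,j$; bijectivity of $\varphi$ then upgrades this to $\varphi(\A_{ij}) = \A'_{ij}$.

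Next, I would prove $\varphi(ab) = \varphi(a)\varphi(b)$ for all Peirce-homogeneous pairs $(a,b) \in \A_{ij} \times \A_{kl}$ except the single case where separation is unavailable. If $ab$ and $ba^{*}$ lie in distinct Peirce components of $\A$ --- which happens for $(a,b) \in \A_{ii}\times\A_{ij}$, $\A_{ij}\times\A_{jj}$, $\A_{ij}\times\A_{ji}$, and $\A_{ij}\times\A_{ij}$ with $i \ne j$ --- then the two summands on each side of the basic identity lie in distinct Peirce components of $\A'$ (since $\varphi(\A_{ij}) = \A'_{ij}$), and projecting component-wise gives $\varphi(ab) = \varphi(a)\varphi(b)$. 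Products between Peirce components forced to vanish by relation (iii) are handled trivially.

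The main obstacle is the outstanding case $(a,b) = (a_{ii}, b_{ii}) \in \A_{ii}\times\A_{ii}$: both $ab$ and $ba^{*}$ lie in $\A_{ii}$, so the projection trick fails. This is where $(\clubsuit)$ enters. For any $c_{ij} \in \A_{ij}$ with $j \ne i$, the alternative Peirce identity $(a_{ii}b_{ii})c_{ij} = a_{ii}(b_{ii}c_{ij})$ combined with the multiplicativity already established on mixed pairs yields
\[
\bigl[\varphi(a_{ii}b_{ii}) - \varphi(a_{ii})\varphi(b_{ii})\bigr]\,\varphi(c_{ij}) = 0.
\]
Setting $y := \varphi(a_{ii}b_{ii}) - \varphi(a_{ii})\varphi(b_{ii}) \in \A'_{ii}$ and using $\varphi(\A_{ij}) = \A'_{ij}$, this says $y\,\A'_{ij} = 0$; and $y\,\A'_{jj} = 0$ automatically since $\A'_{ii}\A'_{jj} = 0$, so $y(\A' f_{j}) = 0$. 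Hypothesis $(\clubsuit)$ then forces $y = 0$. Finally, additivity of $\varphi$ extends multiplicativity from Peirce-homogeneous pairs to arbitrary elements of $\A$, and combined with the $*$-preservation from Theorem \ref{mainthm1} this gives that $\varphi$ is a $*$-ring isomorphism.
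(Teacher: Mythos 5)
Your proposal is correct and follows essentially the same route as the paper: reduce to Theorem \ref{mainthm1} for $*$-additivity, show that the $f_i=\varphi(e_i)$ are symmetric idempotents giving a Peirce decomposition of $\A'$ with $\varphi(\A_{ij})=\A'_{ij}$, verify multiplicativity case by case on Peirce components, and invoke $(\clubsuit)$ --- which, as you should say explicitly since the corollary assumes primeness rather than $(\clubsuit)$, follows from primeness of $\A'$ because $f_j\neq 0$ --- exactly at the diagonal case $\A_{ii}\times\A_{ii}$. The only divergence is minor: for the case $\A_{ij}\times\A_{jj}$ the paper applies $(\clubsuit)$ a second time (Lemma \ref{lemam5}), whereas your projection of $\varphi(ab+ba^{*})=\varphi(a)\varphi(b)+\varphi(b)\varphi(a)^{*}$ onto the components $\A'_{ij}$ and $\A'_{ji}$ handles it directly.
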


Now since $\varphi$ is $*$-additive, by Theorem \ref{mainthm1},  it is enough to verify that $\varphi(ab) = \varphi(a)\varphi(b)$. Firstly, let us prove the following lemmas:

\begin{lemma}\label{lemam0}
 $f_i = \varphi(e_i)$ is an idempotent in $\A'$, with $i \in \{1,2\}$.
\end{lemma}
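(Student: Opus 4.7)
The plan is to exploit the orthogonality $e_1 e_2 = e_2 e_1 = 0$ together with the $*$-additivity already established in Theorem \ref{mainthm1}. Since $\varphi$ is unital and $*$-additive, from $e_1 + e_2 = 1_{\A}$ I obtain $f_1 + f_2 = \varphi(e_1) + \varphi(e_2) = \varphi(1_{\A}) = 1_{\A'}$, and from $e_i^{*} = e_i$ combined with $\varphi(a^{*}) = \varphi(a)^{*}$ I get $f_i^{*} = f_i$. Thus each $f_i$ is symmetric and $f_2 = 1_{\A'} - f_1$.

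The key computation uses $\xi = 1_{\A}$. A one-line induction gives $q_{k*}(1_{\A}, \ldots, 1_{\A}) = 2^{k-1}\cdot 1_{\A}$ for every $k \geq 1$; unwinding the recursive definition then yields
\[
q_{(n-1)*}(1_{\A}, \ldots, 1_{\A}, e_1) = \{2^{n-3}\cdot 1_{\A}, e_1\}_{*} = 2^{n-2} e_1
\]
and consequently
\[
q_{n*}(1_{\A}, \ldots, 1_{\A}, e_1, e_2) = \{2^{n-2} e_1, e_2\}_{*} = 2^{n-2}(e_1 e_2 + e_2 e_1) = 0,
\]
where for $n=2$ one simply reads $q_{2*}(e_1, e_2) = e_1 e_2 + e_2 e_1 = 0$ directly. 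Applying $\varphi$ and using the defining hypothesis with $\xi = 1_{\A}$, the identical calculation carried out in $\A'$ (using $\varphi(1_{\A}) = 1_{\A'}$ and the symmetry $f_i^{*} = f_i$) produces
\[
0 = \varphi(0) = q_{n*}(1_{\A'}, \ldots, 1_{\A'}, f_1, f_2) = 2^{n-2}(f_1 f_2 + f_2 f_1),
\]
so $f_1 f_2 + f_2 f_1 = 0$ (the ground field is $\mathbb{C}$).

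Substituting $f_2 = 1_{\A'} - f_1$ into $f_1 f_2 + f_2 f_1 = 0$ yields $2 f_1 - 2 f_1^{2} = 0$, whence $f_1^{2} = f_1$; the identity $f_2^{2} = f_2$ follows at once, either from $f_2 = 1_{\A'} - f_1$ or by swapping the roles of $e_1$ and $e_2$ throughout. I do not expect a genuine obstacle here: the only subtlety is the choice $\xi = 1_{\A}$, which collapses the iterated $*$-Jordan bracket down to a single $\{\cdot, \cdot\}_{*}$ (up to the factor $2^{n-2}$) and so reduces the general-$n$ identity to the essentially trivial $n=2$ computation. In particular, neither $(\spadesuit)$ nor $(\clubsuit)$ is needed at this step.
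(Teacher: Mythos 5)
Your proof is correct, but it takes a slightly different route from the paper's. The paper evaluates the single product $q_{n*}(1_{\A},\ldots,1_{\A},e_i,e_i)=2^{n-1}e_i$ and compares it with $q_{n*}(1_{\A'},\ldots,1_{\A'},f_i,f_i)=2^{n-1}f_i f_i$ (the symmetry $f_i^{*}=f_i$, coming from the already-proved $*$-additivity, is used here exactly as in your argument), which gives $f_i^{2}=f_i$ in one line and for each $i$ independently. You instead derive the anticommutation relation $f_1f_2+f_2f_1=0$ from the orthogonality $q_{n*}(1_{\A},\ldots,1_{\A},e_1,e_2)=0$ and then substitute $f_2=1_{\A'}-f_1$; this is equally valid but pulls in a few extra ingredients the paper's version does not need at this point, namely $\varphi(0)=0$ (Lemma \ref{Claim1}) and $f_1+f_2=1_{\A'}$ (additivity plus unitality). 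Both arguments hinge on the same two observations --- that taking $\xi=1_{\A}$ collapses the iterated bracket to a scalar multiple of a single $\{\cdot,\cdot\}_{*}$, and that $*$-additivity makes the $f_i$ symmetric --- and you are right that neither $(\spadesuit)$ nor $(\clubsuit)$ is needed. As a small bonus, your route also records the orthogonality $f_1f_2+f_2f_1=0$ explicitly, which the paper only obtains later via Lemma \ref{lemam1}.
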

\begin{proof}
By additivity of $\varphi$ we have
 $$
 \begin{aligned}
 2^{n-1}f_i &= 2^{n-1}\varphi(e_i) = \varphi(2^{n-1}e_i) = \varphi(q_{n*}(1_\A, ...,1_\A,e_i,e_i)) \\
            &= q_{n*}(1_{\A'}, ...,1_{\A'},\varphi(e_i),\varphi(e_i)) = 2^{n-1}\varphi(e_i)\varphi(e_i) = 2^{n-1}f_if_i.
 \end{aligned}
 $$
 Therefore, $f_if_i = f_i$.
\end{proof}

\begin{lemma}\label{lemam1}
If $x \in \A_{ij}$ then $\varphi(x) \in \A'_{ij}$.
\end{lemma}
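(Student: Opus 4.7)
The strategy is to install a Peirce decomposition on $\A'$ relative to $f_1,f_2$ and then test where $\varphi(x)$ lies via suitable $q_{n*}$-evaluations.

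First, I would check that $f_1,f_2$ are orthogonal symmetric idempotents of $\A'$ summing to $1_{\A'}$: symmetry $f_i^* = f_i$ follows from $e_i^* = e_i$ combined with the $*$-preservation supplied by Theorem~\ref{mainthm1}; the sum $f_1 + f_2 = 1_{\A'}$ follows from additivity plus the unital hypothesis; idempotency is Lemma~\ref{lemam0}; and orthogonality $f_i f_j = 0$ (for $i \neq j$) follows from $f_i = f_i \cdot 1_{\A'} = f_i f_1 + f_i f_2$ by left distributivity. The standard associator-alternating argument, combined with $(f_i,\cdot,1_{\A'}) = 0$ and $f_i f_j = 0$, then gives a well-defined Peirce decomposition $\A' = \bigoplus_{i,j}\A'_{ij}$, where $\A'_{ij} = f_i \A' f_j$, with $(\A'_{ij})^* \subseteq \A'_{ji}$.

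Second, for $x \in \A_{ii}$, I would evaluate $q_{n*}(e_j, \ldots, e_j, e_j, x)$ with $j \neq i$. Since $e_j x = x e_j = 0$, this product vanishes; applying $\varphi$ yields $f_j \varphi(x) + \varphi(x) f_j = 0$. Writing $\varphi(x) = a_{11} + a_{12} + a_{21} + a_{22}$ in Peirce components, the left-hand side equals $2 a_{jj} + a_{ij} + a_{ji}$, and directness of the decomposition forces all three summands to vanish. Hence $\varphi(x) \in \A'_{ii}$.

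Third, for $x \in \A_{ij}$ with $i \neq j$, I would use two evaluations. The first, $q_{n*}(e_i, \ldots, e_i, e_i, x) = 2^{n-2} x$ (from $e_i x = x$ and $x e_i = 0$), becomes $\varphi(x) = f_i \varphi(x) + \varphi(x) f_i$ after applying $\varphi$ and using additivity; comparing Peirce components forces $a_{ii} = a_{jj} = 0$, so $\varphi(x) = a_{ij} + a_{ji}$. The second evaluation, $q_{n*}(e_i, \ldots, e_i, x, e_j) = 2^{n-3}(x + x^*)$ (using $x e_j = x$ and $e_j x^* = x^* \in \A_{ji}$), becomes $\varphi(x) + \varphi(x)^* = a_{ij} + a_{ij}^*$ after applying $\varphi$ (using $*$-additivity on the left and the Peirce rules on the right). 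Subtracting the known decomposition $\varphi(x) + \varphi(x)^* = a_{ij} + a_{ji} + a_{ij}^* + a_{ji}^*$ gives $a_{ji} + a_{ji}^* = 0$, and since $a_{ji} \in \A'_{ji}$ while $a_{ji}^* \in \A'_{ij}$, directness forces $a_{ji} = 0$. Therefore $\varphi(x) \in \A'_{ij}$.

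The main delicate point is justifying the non-associative product manipulations used in expanding $q_n$ on the $\A'$ side: identities like $f_i(f_i y) = f_i y$ and $(f_i y) f_j = f_i(y f_j)$ must be derived from the left/right alternative and flexibility laws together with $f_i f_j = 0$, rather than by appealing to associativity. Once these are in place the computation reduces to routine Peirce bookkeeping.
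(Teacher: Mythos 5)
Your proof is correct and follows essentially the same route as the paper: establish that $f_1,f_2$ give a Peirce decomposition of $\A'$ and then locate $\varphi(x)$ by applying $\varphi$ to suitable $q_{n*}$-evaluations against the idempotents. The only difference is cosmetic: to kill the $\A'_{ji}$-component of $\varphi(x)$ for $x\in\A_{ij}$, the paper evaluates $q_{n*}(e_i,\ldots,e_i,x,e_i)=0$ and left-multiplies the resulting identity by $f_j$, whereas you evaluate $q_{n*}(e_i,\ldots,e_i,x,e_j)=2^{n-3}(x+x^*)$ and compare Peirce components; both work, and you additionally spell out the diagonal case $x\in\A_{ii}$ and the verification that $f_1,f_2$ are orthogonal symmetric idempotents summing to $1_{\A'}$, which the paper leaves implicit.
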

\begin{proof}
Firstly, given $x \in \A_{ij}$, with $i \neq j$, we observe that
$$
\begin{aligned}
2^{n-2}\varphi(x) &= \varphi(2^{n-2}x) = \varphi(q_{n*}(e_j,...,e_j,x)) = q_{n*}(\varphi(e_j),...,\varphi(e_j),\varphi(x)) \\
                  &= 2^{n-2}(f_j\varphi(x) + \varphi(x)f_j),
\end{aligned}
$$
that is,  since $\varphi$ is unital we have $f_i\varphi(x)f_i = f_j\varphi(x)f_j = 0$.
Even more,
$$
\begin{aligned}
0 &= \varphi(q_{n*}(e_i,...,e_i,x,e_i)) = q_{n*}(f_i,...,f_i,\varphi(x),f_i) \\
  &= 2^{n-3}(f_i\varphi(x)f_i + \varphi(x)f_i + f_i\varphi(x)^*f_i + f_i\varphi(x)^*).
\end{aligned}
$$
Multiplying left side by $f_j$ we obtain $f_j\varphi(x)f_i = 0$. Therefore, $\varphi(x) \in \A'_{ij}$. 
In a similar way, if $x \in \A_{ii}$ we conclude that $\varphi(x) \in \A'_{ii}$.
\end{proof}

\begin{lemma}\label{lemam2}
If $a_{ii} \in \A_{ii}$ and $b_{ij} \in \A_{ij}$, with $i \neq j$, then $\varphi(a_{ii}b_{ij})=\varphi(a_{ii})\varphi(b_{ij})$.
\end{lemma}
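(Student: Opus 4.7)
The plan is to evaluate $q_{n*}(e_i,\ldots,e_i,a_{ii},b_{ij})$ (with $n-2$ copies of $e_i$) on both sides of the preservation identity. The computation inside $\A$ collapses to a scalar multiple of $a_{ii}b_{ij}$ by Peirce arithmetic; the same shape of computation inside $\A'$ should then produce $\varphi(a_{ii})\varphi(b_{ij})$, provided $f_i=\varphi(e_i)$ behaves enough like $e_i$ for the Peirce rules to be applicable there as well.

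\textbf{Step 1 (computation in $\A$).} An easy induction gives $q_{k*}(e_i,\ldots,e_i,a_{ii})=2^{k-1}a_{ii}$ (using $e_ia_{ii}=a_{ii}e_i=a_{ii}$ and $e_i^{*}=e_i$), so
\[
q_{n*}(e_i,\ldots,e_i,a_{ii},b_{ij}) \;=\; q_{2*}(2^{n-2}a_{ii},b_{ij}) \;=\; 2^{n-2}\bigl(a_{ii}b_{ij}+b_{ij}a_{ii}^{*}\bigr).
\]
Now $a_{ii}^{*}\in\A_{ii}$ by Proposition \ref{obs}, and since $j\neq i$, Peirce rule (iii) gives $\A_{ij}\A_{ii}=0$; hence $b_{ij}a_{ii}^{*}=0$ and the expression reduces to $2^{n-2}a_{ii}b_{ij}$.

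\textbf{Step 2 (transfer to $\A'$).} To rerun the calculation on the image side I need $f_1,f_2$ to be symmetric orthogonal idempotents summing to $1_{\A'}$. Idempotence is Lemma \ref{lemam0}; symmetry follows from the $*$-additivity of $\varphi$ (Theorem \ref{mainthm1}) via $f_i^{*}=\varphi(e_i)^{*}=\varphi(e_i^{*})=\varphi(e_i)=f_i$; and $f_1+f_2=\varphi(e_1)+\varphi(e_2)=\varphi(1_{\A})=1_{\A'}$ by unitality and additivity. Hence $\A'$ admits a Peirce decomposition relative to $f_i$ with the same multiplicative relations (i)--(iv). Since Lemma \ref{lemam1} gives $\varphi(a_{ii})\in\A'_{ii}$ and $\varphi(b_{ij})\in\A'_{ij}$, the identical induction yields
\[
q_{n*}(f_i,\ldots,f_i,\varphi(a_{ii}),\varphi(b_{ij})) \;=\; 2^{n-2}\varphi(a_{ii})\varphi(b_{ij}),
\]
the companion term $\varphi(b_{ij})\varphi(a_{ii})^{*}$ vanishing by the same Peirce argument applied in $\A'$.

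\textbf{Step 3 (conclusion).} Combining the hypothesis on $\varphi$ with the $*$-additivity from Theorem \ref{mainthm1},
\[
2^{n-2}\varphi(a_{ii}b_{ij}) \;=\; \varphi\bigl(q_{n*}(e_i,\ldots,e_i,a_{ii},b_{ij})\bigr) \;=\; q_{n*}(f_i,\ldots,f_i,\varphi(a_{ii}),\varphi(b_{ij})) \;=\; 2^{n-2}\varphi(a_{ii})\varphi(b_{ij}),
\]
and cancelling the factor $2^{n-2}$ (the ground field is $\mathbb{C}$) delivers the claim. I expect no serious obstacle: the only delicate point is the idempotent-and-symmetry check in Step 2, which is exactly what is needed for the Peirce arithmetic in $\A'$ to mirror that in $\A$ and in particular to kill the reversed term $\varphi(b_{ij})\varphi(a_{ii})^{*}$; everything else is bookkeeping with the recursive definition of $q_{n*}$.
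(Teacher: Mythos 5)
Your proposal is correct and follows exactly the paper's own route: evaluate $q_{n*}(e_i,\ldots,e_i,a_{ii},b_{ij})=2^{n-2}a_{ii}b_{ij}$ via the Peirce relations, apply the preservation hypothesis together with additivity, and use Lemma \ref{lemam1} to collapse the right-hand side to $2^{n-2}\varphi(a_{ii})\varphi(b_{ij})$. The only difference is that you spell out the verification that $f_1,f_2$ are symmetric orthogonal idempotents summing to $1_{\A'}$ so that the Peirce arithmetic transfers to $\A'$, a point the paper leaves implicit.
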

\begin{proof}
Let $a_{ii} \in \A_{ii}$ and $b_{ij} \in \A_{ij}$, with $i \neq j$. Then, by Lemma \ref{lemam1} and additivity of $\varphi$,
$$
\begin{aligned}
2^{n-2}\varphi(a_{ii}b_{ij}) &= \varphi(2^{n-2}a_{ii}b_{ij}) = \varphi(q_{n*}(e_i,...,e_i,a_{ii},b_{ij})) \\
            &= q_{n*}(\varphi(e_i),...,\varphi(e_i),\varphi(a_{ii}),\varphi(b_{ij})) = 2^{n-2}\varphi(a_{ii})\varphi(b_{ij}).
\end{aligned}
$$
Therefore, $
\varphi(a_{ii}b_{ij}) = \varphi(a_{ii})\varphi(b_{ij}).$
\end{proof}

\begin{lemma}\label{lemam3}
If $a_{ii},b_{ii} \in \A_{ii}$ then $\varphi(a_{ii}b_{ii}) = \varphi(a_{ii})\varphi(b_{ii})$.
\end{lemma}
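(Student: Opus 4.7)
The plan is to mirror the strategy of Lemma \ref{lemam2}: reduce $\varphi(a_{ii}b_{ii}) = \varphi(a_{ii})\varphi(b_{ii})$ to the annihilator condition appearing in $(\clubsuit)$. Set $y := \varphi(a_{ii}b_{ii}) - \varphi(a_{ii})\varphi(b_{ii})$; Lemma \ref{lemam1} puts both summands in $\A'_{ii}$, so $y \in \A'_{ii}$. My goal is to show $y\,\A'\varphi(e_j) = \{0\}$ for $j\neq i$, which via $(\clubsuit)$ will force $y = 0$.

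First I would establish the associator identity $(a_{ii}b_{ii})c_{ij} = a_{ii}(b_{ii}c_{ij})$ for every $c_{ij} \in \A_{ij}$. Since the associator in any alternative algebra is an alternating trilinear function, $[a_{ii}, b_{ii}, c_{ij}] = [c_{ij}, a_{ii}, b_{ii}]$; but both $c_{ij}a_{ii}$ and $c_{ij}(a_{ii}b_{ii})$ vanish by the Peirce rule $\A_{ij}\A_{ii} = 0$, giving the identity. Applying $\varphi$ to the left side and invoking Lemma \ref{lemam2} yields $\varphi(a_{ii}b_{ii})\varphi(c_{ij})$; applying $\varphi$ to the right side and invoking Lemma \ref{lemam2} twice yields $\varphi(a_{ii})\bigl(\varphi(b_{ii})\varphi(c_{ij})\bigr)$. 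The same Peirce argument carried out in $\A'$ (legitimate because $\varphi(a_{ii}), \varphi(b_{ii}) \in \A'_{ii}$ and $\varphi(c_{ij}) \in \A'_{ij}$ by Lemma \ref{lemam1}) lets me reassociate the latter as $\bigl(\varphi(a_{ii})\varphi(b_{ii})\bigr)\varphi(c_{ij})$. Subtracting, $y\,\varphi(c_{ij}) = 0$ for every $c_{ij} \in \A_{ij}$.

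Next I would upgrade this to the full annihilator statement $y\,\A'\varphi(e_j) = 0$. The restriction $\varphi|_{\A_{ij}} \colon \A_{ij} \to \A'_{ij}$ is in fact surjective: for any $z \in \A'_{ij}$, write $\varphi^{-1}(z) = \sum t_{kl}$ in Peirce components; additivity of $\varphi$ together with Lemma \ref{lemam1} gives $z = \sum \varphi(t_{kl})$ with $\varphi(t_{kl}) \in \A'_{kl}$, so the directness of the Peirce decomposition of $\A'$ and injectivity of $\varphi$ force $t_{kl} = 0$ whenever $(k,l) \neq (i,j)$. Hence $y\,\A'_{ij} = 0$, while $y\,\A'_{jj} = 0$ is automatic from the Peirce rule $\A'_{ii}\A'_{jj} = 0$ since $y \in \A'_{ii}$. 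Because $\A'\varphi(e_j) = \A'_{1j} \oplus \A'_{2j}$, I conclude $y\,\A'\varphi(e_j) = 0$, and $(\clubsuit)$ delivers $y = 0$.

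The only genuinely non-routine step is the associator identity $(a_{ii}b_{ii})c_{ij} = a_{ii}(b_{ii}c_{ij})$, which is the sole place where alternativity (rather than associativity) enters; skew-symmetry of the associator handles it cleanly, and the same reasoning in $\A'$ works via Lemma \ref{lemam1}, so no fresh non-associative subtlety arises there. Everything else is Peirce bookkeeping, repeated application of Lemma \ref{lemam2}, and a single appeal to $(\clubsuit)$.
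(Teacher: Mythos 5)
Your proof is correct and follows essentially the same route as the paper: establish $(a_{ii}b_{ii})c_{ij} = a_{ii}(b_{ii}c_{ij})$, push it through $\varphi$ with Lemma \ref{lemam2}, and kill the difference with $(\clubsuit)$. In fact you supply two details the paper glosses over --- the reassociation identity is justified via the alternating associator and Peirce rules (the paper vaguely cites ``flexibility''), and you explicitly prove that $\varphi$ maps $\A_{ij}$ onto $\A'_{ij}$, which is needed to pass from $y\,\varphi(\A_{ij})=0$ to $y\,(\A'\varphi(e_j))=\{0\}$.
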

\begin{proof}
Let $x$ be an element of $\A_{ij}$, with $i \neq j$. Using Lemmas~\ref{lemam1},~\ref{lemam2} and the flexibility of alternative algebras we obtain
\begin{eqnarray*}
\varphi(a_{ii}b_{ii})\varphi(x) =& \  \varphi((a_{ii}b_{ii})x) = \varphi(a_{ii}(b_{ii}x)) = \varphi(a_{ii})\varphi(b_{ii}x) \\ =&  \varphi(a_{ii})(\varphi(b_{ii})\varphi(x)) =(\varphi(a_{ii})\varphi(b_{ii}))\varphi(x)
\end{eqnarray*}
that is,
$$
(\varphi(a_{ii}b_{ii}) - \varphi(a_{ii})\varphi(b_{ii}))\varphi(x) = 0.
$$
Now, by Lemma \ref{lemam1}, since $\varphi(x) \in \A'_{ij}$ and $\varphi(a_{ii}b_{ii}) - \varphi(a_{ii})\varphi(b_{ii}) \in \A'_{ii}$, we have
$$
(\varphi(a_{ii}b_{ii}) - \varphi(a_{ii})\varphi(b_{ii}))(\A'\varphi(e_j)) = \{0\}.
$$
Finally, $\left(\clubsuit \right)$ ensures that $\varphi(a_{ii}b_{ii}) = \varphi(a_{ii})\varphi(b_{ii})$.
\end{proof}

\begin{lemma} \label{lemam4}
If $a_{ij} \in \A_{ij}$ and $b_{ji} \in \A_{ji}$, with $i \neq j$, then $\varphi(a_{ij}b_{ji}) = \varphi(a_{ij})\varphi(b_{ji})$.
\end{lemma}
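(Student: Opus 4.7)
The plan is to evaluate the defining hypothesis on the tuple $(e_i,\ldots,e_i,a_{ij},b_{ji})$ with $n-2$ copies of $e_i$, and then split the resulting identity across Peirce components of $\A'$. A short induction on $k$, using $e_i^{*}=e_i$ together with the Peirce relations $e_i a_{ij}=a_{ij}$ and $a_{ij}e_i=0$, gives
\[
q_{k*}(e_i,\ldots,e_i)=2^{k-1}e_i \quad\text{and}\quad q_{k*}(e_i,\ldots,e_i,a_{ij})=2^{k-2}a_{ij}
\]
for $k\geq 2$, and one further bracket yields
\[
q_{n*}(e_i,\ldots,e_i,a_{ij},b_{ji})=2^{n-3}\bigl(a_{ij}b_{ji}+b_{ji}a_{ij}^{*}\bigr)
\]
for $n\geq 3$; the case $n=2$ reduces to $q_{2*}(a_{ij},b_{ji})=a_{ij}b_{ji}+b_{ji}a_{ij}^{*}$ directly.

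Applying $\varphi$ to this relation, replacing the left side by the hypothesis, invoking the $*$-additivity from Theorem \ref{mainthm1}, and carrying out the analogous computation inside $\A'$ with $f_i:=\varphi(e_i)$ in place of $e_i$, one arrives, after cancelling the common scalar, at
\[
\varphi(a_{ij}b_{ji})+\varphi(b_{ji}a_{ij}^{*})=\varphi(a_{ij})\varphi(b_{ji})+\varphi(b_{ji})\varphi(a_{ij})^{*}.
\]
The parallel computation in $\A'$ is justified because Lemma \ref{lemam1} places $\varphi(a_{ij})\in\A'_{ij}$ and $\varphi(b_{ji})\in\A'_{ji}$, and because $f_1+f_2=1_{\A'}$ together with $f_i^{2}=f_i$ yields $f_1 f_2=(1_{\A'}-f_2)f_2=0=f_2 f_1$; thus $f_1,f_2$ are orthogonal symmetric idempotents in $\A'$ and the Peirce machinery (including $e_i^{*}=e_i$ replaced by $f_i^{*}=f_i$) transfers verbatim.

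The conclusion then follows by separating Peirce components. On the left, $a_{ij}b_{ji}\in\A_{ii}$ and $b_{ji}a_{ij}^{*}\in\A_{ji}\A_{ji}\subseteq\A_{ij}$, so Lemma \ref{lemam1} puts $\varphi(a_{ij}b_{ji})\in\A'_{ii}$ and $\varphi(b_{ji}a_{ij}^{*})\in\A'_{ij}$; on the right, $\varphi(a_{ij})\varphi(b_{ji})\in\A'_{ij}\A'_{ji}\subseteq\A'_{ii}$ while $\varphi(b_{ji})\varphi(a_{ij})^{*}\in\A'_{ji}\A'_{ji}\subseteq\A'_{ij}$. Since the Peirce decomposition of $\A'$ is a direct sum, matching the $\A'_{ii}$-components yields the required identity $\varphi(a_{ij}b_{ji})=\varphi(a_{ij})\varphi(b_{ji})$ (and, as a bonus, the $\A'_{ij}$-comparison gives $\varphi(b_{ji}a_{ij}^{*})=\varphi(b_{ji})\varphi(a_{ij})^{*}$). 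The main obstacle is purely bookkeeping: tracking Peirce indices through the iterated $q_{n*}$-brackets and confirming that the Peirce decomposition of $\A'$ really is a direct sum, which reduces to the orthogonality of $f_1$ and $f_2$ established above.
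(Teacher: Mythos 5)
Your proposal is correct, and it follows the same basic route as the paper's proof --- evaluate the hypothesis on the tuple $(e_i,\ldots,e_i,a_{ij},b_{ji})$, use additivity, and cancel the scalar $2^{n-3}$ --- but it is in fact more careful than the printed argument, and the extra care is genuinely needed. The paper's proof asserts $\varphi(2^{n-3}a_{ij}b_{ji})=\varphi(q_{n*}(e_i,\ldots,e_i,a_{ij},b_{ji}))$ and $q_{n*}(\varphi(e_i),\ldots,\varphi(a_{ij}),\varphi(b_{ji}))=2^{n-3}\varphi(a_{ij})\varphi(b_{ji})$, which silently discards the cross terms $b_{ji}a_{ij}^{*}$ and $\varphi(b_{ji})\varphi(a_{ij})^{*}$; in an alternative algebra these need not vanish, since $\A_{ji}\A_{ji}\subseteq\A_{ij}$ can be nonzero --- exactly the point of the remark in Section~1 and the reason Lemma~\ref{lemam6} exists at all. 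You keep these terms, obtain $\varphi(a_{ij}b_{ji})+\varphi(b_{ji}a_{ij}^{*})=\varphi(a_{ij})\varphi(b_{ji})+\varphi(b_{ji})\varphi(a_{ij})^{*}$, and then eliminate them by observing that the two summands on each side lie in the complementary Peirce components $\A'_{ii}$ and $\A'_{ij}$ (using Lemma~\ref{lemam1}, Proposition~\ref{obs}, and the orthogonal symmetric idempotents $f_1,f_2$ with $f_1+f_2=1_{\A'}$), so the $\A'_{ii}$-parts must agree separately. Your computation of $q_{k*}(e_i,\ldots,e_i)=2^{k-1}e_i$ and $q_{n*}(e_i,\ldots,e_i,a_{ij},b_{ji})=2^{n-3}(a_{ij}b_{ji}+b_{ji}a_{ij}^{*})$ is right, as is the separate treatment of $n=2$. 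In short: same skeleton as the paper, but your Peirce-component comparison supplies a step the paper omits and arguably repairs a gap in its proof; the only cost is the small amount of bookkeeping you already acknowledge.
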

\begin{proof}
Let $a_{ij} \in \A_{ij}$ and $b_{ji} \in \A_{ji}$, with $i \neq j$. Then, by Lemma \ref{lemam1} and additivity of $\varphi$, 
$$
\begin{aligned}
2^{n-3}\varphi(a_{ij}b_{ji}) &= \varphi(2^{n-3}a_{ij}b_{ji}) = \varphi(q_{n*}(e_i,...,e_i,a_{ij},b_{ji})) \\
     &= q_{n*}(\varphi(e_i),...,\varphi(e_i),\varphi(a_{ij}),\varphi(b_{ji})) = 2^{n-3}\varphi(a_{ij})\varphi(b_{ji}).
\end{aligned}
$$
Therefore, $\varphi(a_{ij}b_{ji}) = \varphi(a_{ij})\varphi(b_{ji}).$

\end{proof}

\begin{lemma}\label{lemam5}
If $a_{ij} \in \A_{ij}$ and $b_{jj} \in \A_{jj}$, with $i \neq j$, then $\varphi(a_{ij}b_{jj}) = \varphi(a_{ij})\varphi(b_{jj})$
\end{lemma}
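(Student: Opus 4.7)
The approach mirrors Lemma~\ref{lemam2}, computing $q_{n*}(e_i,\ldots,e_i,a_{ij},b_{jj})$ explicitly in Peirce components and then pushing the identity through $\varphi$; the new feature compared to Lemma~\ref{lemam2} is that the $*$ in the $*$-Jordan product drags along an extra Peirce summand that cannot be killed by a clever choice of slots, so I will need to dispose of it separately using the multiplicativity already established.

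First I would evaluate the left-hand product in $\A$. A short induction using $a_{ij}e_i=0$, the symmetry of $e_i$, and $q_{k*}(e_i,\ldots,e_i)=2^{k-1}e_i$, gives
\[
q_{n*}(e_i,\ldots,e_i,a_{ij},b_{jj}) \;=\; 2^{n-3}\bigl(a_{ij}b_{jj}+b_{jj}a_{ij}^{*}\bigr),
\]
and by the Peirce rule~(i) together with Proposition~\ref{obs} the two summands lie in $\A_{ij}$ and $\A_{ji}$ respectively. By Lemma~\ref{lemam0} and Lemma~\ref{lemam1}, the idempotent $f_i=\varphi(e_i)$ and the element $\varphi(a_{ij})\in\A'_{ij}$ satisfy the same Peirce relations in $\A'$, so an identical calculation gives
\[
q_{n*}(f_i,\ldots,f_i,\varphi(a_{ij}),\varphi(b_{jj})) \;=\; 2^{n-3}\bigl(\varphi(a_{ij})\varphi(b_{jj})+\varphi(b_{jj})\varphi(a_{ij})^{*}\bigr).
\]

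Applying the defining property of $\varphi$ to the two displays, expanding the left-hand side with the $*$-additivity from Theorem~\ref{mainthm1}, and cancelling $2^{n-3}$, I obtain
\[
\varphi(a_{ij}b_{jj}) + \varphi(b_{jj}a_{ij}^{*}) \;=\; \varphi(a_{ij})\varphi(b_{jj}) + \varphi(b_{jj})\varphi(a_{ij})^{*}.
\]
The closing observation is that the nuisance term is already known to be multiplicative: applying Lemma~\ref{lemam2} with the roles of $i$ and $j$ interchanged (so $b_{jj}\in\A_{jj}$ and $a_{ij}^{*}\in\A_{ji}$, the latter by Proposition~\ref{obs}) gives $\varphi(b_{jj}a_{ij}^{*})=\varphi(b_{jj})\varphi(a_{ij}^{*})=\varphi(b_{jj})\varphi(a_{ij})^{*}$, where the final equality uses that $\varphi$ preserves the involution. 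Substituting and cancelling the common term leaves $\varphi(a_{ij}b_{jj})=\varphi(a_{ij})\varphi(b_{jj})$.

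The expected main obstacle is precisely this extra summand $b_{jj}a_{ij}^{*}$: in Lemma~\ref{lemam2} the analogous term died for Peirce reasons, whereas here it is genuinely nonzero and no permutation of $\xi\in\{1_\A,e_1,e_2\}$ in the first $n-2$ slots can eliminate it. The resolution is structural rather than computational, namely to recognise the term as an instance of the product already treated in Lemma~\ref{lemam2} so that, after $*$-additivity is used, it cancels between the two sides; this is also why Lemma~\ref{lemam5} must be proved after Lemma~\ref{lemam2} and not in parallel.
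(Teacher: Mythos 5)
Your proof is correct, but it takes a genuinely different route from the paper. The paper argues by nondegeneracy: for an arbitrary $x\in\A_{ji}$ it writes
$\varphi(a_{ij}b_{jj})\varphi(x)=\varphi(a_{ij}b_{jj}x)=\varphi(a_{ij})\varphi(b_{jj}x)=\varphi(a_{ij})\varphi(b_{jj})\varphi(x)$
using Lemmas~\ref{lemam2} and~\ref{lemam4}, locates the difference $\varphi(a_{ij}b_{jj})-\varphi(a_{ij})\varphi(b_{jj})$ in $\A'_{ij}$ via Lemma~\ref{lemam1}, and then kills it with the hypothesis $(\clubsuit)$ on $\A'$. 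Your computation $q_{n*}(e_i,\ldots,e_i,a_{ij},b_{jj})=2^{n-3}\bigl(a_{ij}b_{jj}+b_{jj}a_{ij}^*\bigr)$ is right, the matching computation in $\A'$ is justified by Lemmas~\ref{lemam0} and~\ref{lemam1}, and disposing of the extra summand by Lemma~\ref{lemam2} with the indices interchanged together with $\varphi(a_{ij}^*)=\varphi(a_{ij})^*$ is legitimate, so the cancellation closes the argument. What your version buys: it needs neither Lemma~\ref{lemam4} nor the condition $(\clubsuit)$ for this step, and it avoids the paper's silent reassociation $(a_{ij}b_{jj})x=a_{ij}\bigl(b_{jj}x\bigr)$, which in an alternative (non-associative) algebra is not free and deserves its own justification. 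What the paper's version buys is uniformity with the proofs of Lemmas~\ref{lemam3} and the fact that it leans only on additivity rather than on $*$-preservation. One minor point shared with the paper: the coefficient $2^{n-3}$ presumes $n\ge 3$; for $n=2$ the identity holds verbatim with coefficient $1$, and your cancellation argument is unaffected.
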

\begin{proof}
Let $x$ be an element of $\A_{ji}$, with $i \neq j$. Using Lemmas \ref{lemam2} and \ref{lemam4} we obtain
$$
\varphi(a_{ij}b_{jj})\varphi(x) = \varphi(a_{ij}b_{jj}x) = \varphi(a_{ij})\varphi(b_{jj}x) = \varphi(a_{ij})\varphi(b_{jj})\varphi(x),
$$
that is,
$$
(\varphi(a_{ij}b_{jj}) - \varphi(a_{ij})\varphi(b_{jj}))\varphi(x) = 0.
$$
Now, by Lemma \ref{lemam1}, since $\varphi(x) \in \A'_{ji}$ and $\varphi(a_{ij}b_{jj}) - \varphi(a_{ij})\varphi(b_{jj}) \in \A'_{ij}$, we have
$$
(\varphi(a_{ij}b_{jj}) - \varphi(a_{ij})\varphi(b_{jj})(\A'\varphi(e_i)) = \{0\}.
$$
Finally, $\left(\clubsuit \right)$ ensures that $\varphi(a_{ij}b_{jj}) = \varphi(a_{ij})\varphi(b_{jj})$.
\end{proof}

As in general $a_{ij}b_{ij} \neq 0$ for $i \neq j$ in alternative $*$-algebras, we need to prove.

\begin{lemma}\label{lemam6}
If $a_{ij} \in \A_{ij}$ and $b_{ij} \in \A_{ij}$, with $i \neq j$, then $\varphi(a_{ij}b_{ij}) = \varphi(a_{ij})\varphi(b_{ij})$
\end{lemma}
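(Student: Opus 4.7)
My plan is to exploit the same algebraic identity that was already used in Lemma \ref{lema5.1}, namely
\[
q_{n*}\!\left(1_{\A},\ldots,1_{\A}, a_{ij}, \tfrac{1}{2^{n-2}}(e_j+b_{ij})\right)
= a_{ij} + a_{ij}b_{ij} + a_{ij}^{*} + b_{ij}a_{ij}^{*},
\]
which holds because $q_{(n-1)*}(1_{\A},\ldots,1_{\A},a_{ij}) = 2^{n-2}a_{ij}$ and one iteration of $\{-,-\}_{*}$ produces the four summands. This is the only place where the product $a_{ij}b_{ij}$ explicitly appears inside $q_{n*}$, so it is the natural hook.

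First I would apply $\varphi$ to both sides and use the hypothesis on $\varphi$ together with $*$-additivity (already proved in Theorem \ref{mainthm1}) to pass the scalar $\tfrac{1}{2^{n-2}}$ and the sum $e_j+b_{ij}$ through $\varphi$. This converts the left-hand side into
\[
\varphi(a_{ij}) + \varphi(a_{ij}b_{ij}) + \varphi(a_{ij})^{*} + \varphi(b_{ij}a_{ij}^{*})
\]
and, on the right-hand side, yields the $q_{n*}$ expression built from $\varphi(1_{\A})=1_{\A'}$, $\varphi(a_{ij})$, and $\tfrac{1}{2^{n-2}}(f_{j}+\varphi(b_{ij}))$ with $f_j=\varphi(e_j)$.

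Next I would evaluate this right-hand side by repeating the same computation \emph{inside} $\A'$. Here the crucial input is Lemma \ref{lemam1}, which places $\varphi(a_{ij}),\varphi(b_{ij})\in\A'_{ij}$ (with respect to the Peirce decomposition of $\A'$ induced by $f_1,f_2$, themselves symmetric idempotents summing to $1_{\A'}$). This gives $f_j\varphi(a_{ij})^{*}=\varphi(a_{ij})^{*}$, $\varphi(a_{ij})f_j = \varphi(a_{ij})$, etc., so the mirror identity
\[
q_{n*}\!\left(1_{\A'},\ldots,1_{\A'}, \varphi(a_{ij}), \tfrac{1}{2^{n-2}}(f_j+\varphi(b_{ij}))\right)
= \varphi(a_{ij}) + \varphi(a_{ij})\varphi(b_{ij}) + \varphi(a_{ij})^{*} + \varphi(b_{ij})\varphi(a_{ij})^{*}
\]
goes through unchanged.

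Finally I would cancel $\varphi(a_{ij})$ and $\varphi(a_{ij})^{*}$ from the two expressions, leaving
\[
\varphi(a_{ij}b_{ij}) + \varphi(b_{ij}a_{ij}^{*}) = \varphi(a_{ij})\varphi(b_{ij}) + \varphi(b_{ij})\varphi(a_{ij})^{*},
\]
and apply Lemma \ref{lemam4} to the mixed-Peirce product $b_{ij}a_{ij}^{*}\in\A_{ij}\A_{ji}$ to rewrite $\varphi(b_{ij}a_{ij}^{*}) = \varphi(b_{ij})\varphi(a_{ij})^{*}$. The two terms on each side cancel pairwise, yielding $\varphi(a_{ij}b_{ij})=\varphi(a_{ij})\varphi(b_{ij})$. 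The only genuinely delicate point is making sure the Peirce arithmetic that killed $e_j a_{ij}^{*}$, $a_{ij} e_j$, etc.\ in $\A$ also goes through in $\A'$; this is entirely handled by Lemmas \ref{lemam0} and \ref{lemam1}, so no new ingredient beyond what is already available should be needed, and property $(\clubsuit)$ is in fact not required for this particular lemma.
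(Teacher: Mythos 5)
Your proof is correct, and it is a variant of the paper's argument rather than a copy of it. The paper simply feeds the word $q_{n*}(e_i,\dots,e_i,a_{ij},b_{ij})$ into the multiplicativity hypothesis and asserts that this word equals $2^{n-2}a_{ij}b_{ij}$; in fact it equals $2^{n-3}\bigl(a_{ij}b_{ij}+b_{ij}a_{ij}^{*}\bigr)$, and the cross term $b_{ij}a_{ij}^{*}\in\A_{ii}$ need not vanish in an alternative algebra, so the paper's one-line computation silently requires exactly the cancellation you perform: expand both sides by additivity and Lemma \ref{lemam1}, then remove $\varphi(b_{ij}a_{ij}^{*})=\varphi(b_{ij})\varphi(a_{ij})^{*}$ via Lemma \ref{lemam4}. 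You reach the same cancellation through a longer word, $q_{n*}\bigl(1_{\A},\dots,1_{\A},a_{ij},\tfrac{1}{2^{n-2}}(e_j+b_{ij})\bigr)$, borrowed from Lemma \ref{lema5.1}, which additionally produces the harmless terms $\varphi(a_{ij})$ and $\varphi(a_{ij})^{*}$ on both sides. Both routes use the same ingredients (additivity and hence $\mathbb{Q}$-homogeneity of $\varphi$, $*$-preservation, and Lemmas \ref{lemam0}, \ref{lemam1}, \ref{lemam4}); yours is slightly longer but has the merit of writing the cross term explicitly and disposing of it correctly, whereas the paper's shorter test word would do the same job with less bookkeeping once that term is acknowledged. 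You are also right that $(\clubsuit)$ is not needed for this particular lemma.
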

\begin{proof}
Let $a_{ij}, b_{ij} \in \A_{ij}$, with $i \neq j$. Then, by Lemma \ref{lemam1} and additivity of $\varphi$,
$$
\begin{aligned}
2^{n-2}\varphi(a_{ij}b_{ij}) &= \varphi(2^{n-2}a_{ij}b_{ij}) = \varphi(q_{n*}(e_i,...,e_i,a_{ij},b_{ij})) \\
            &= q_{n*}(\varphi(e_i),...,\varphi(e_i),\varphi(a_{ij}),\varphi(b_{ij})) = 2^{n-2}\varphi(a_{ij})\varphi(b_{ij}).
\end{aligned}
$$
Therefore, $\varphi(a_{ij}b_{ij}) = \varphi(a_{ij})\varphi(b_{ij}).$
\end{proof}

Thus, by additivity of $\varphi$, proved in the Theorem $\ref{mainthm1}$, and the lemmas above we conclude that $\varphi(ab) = \varphi(a)\varphi(b)$. Therefore $\varphi$ is a $*$-ring isomorphism.

\section{Application}

A complete normed alternative complex $*$-algebra $\A$ is called of alternative
$C^{*}$-algebra if it satisfies the condition: $\left\|a^{*}a\right\| = \left\|a\right\|^2$, for all elements $a \in \A$. Alternative $C^{*}$-algebras are non-associative generalizations of
$C^{*}$-algebras and appear in various areas in Mathematics (see more details in the references \cite{Miguel1} and \cite{Miguel2}). 
An alternative $C^{*}$-algebra $A$ is called of alternative $W^{*}$-algebra if it is a dual Banach space and a prime alternative $W^{*}$-algebra is called
alternative $W^{*}$-factor. It is well known that non-zero alternative $W^{*}$-algebras are unital. 

\begin{theorem}
Let $\A$ and $\A'$ be two alternative $W^{*}$-factors. If a map $\varphi : \A \rightarrow \A'$ satisfies
$$\varphi(q_{n*}(\xi, \ldots, \xi, a,b)) =
q_{n*}(\varphi(\xi), \ldots, \varphi(\xi), \varphi(a),\varphi(b)),$$
for all $a,b\in \A$ and $\xi \in \{1_\A, e_1, e_2\},$ then $\varphi$ is an additive $*$-ring isomorphism.  
\end{theorem}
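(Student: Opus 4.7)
The plan is to reduce the theorem directly to the corollary of Theorem \ref{mainthm2}, which already proves exactly the desired statement for bijective unital maps between prime alternative $*$-algebras whose characteristic avoids $2$ and $3$. The entire argument is therefore one of checking that alternative $W^{*}$-factors satisfy the hypotheses of that corollary.

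First I would record the structural facts. By the definitions recalled in this section, an alternative $W^{*}$-factor is a prime alternative $W^{*}$-algebra; moreover every non-zero alternative $W^{*}$-algebra is unital, so both $\A$ and $\A'$ are unital prime alternative $*$-algebras. Since the ground field $\mathbb{C}$ has characteristic $0$, the standing remark used to derive the corollaries of Theorems \ref{mainthm1} and \ref{mainthm2} applies: $\A$ satisfies $(\spadesuit)$ and $\A'$ satisfies $(\clubsuit)$. Existence of nontrivial symmetric idempotents $e_1, e_2 = 1_{\A} - e_1 \in \A$ is built into the range of $\xi$ in the hypothesis. Bijectivity of $\varphi$ is implicit in the conclusion ``$*$-ring isomorphism'', and unitality $\varphi(1_{\A}) = 1_{\A'}$ can be extracted from a Lemma \ref{lemam0}-type computation applied to $\xi = a = b = 1_{\A}$: the identity shows that $\varphi(1_{\A})$ is a symmetric idempotent which, by surjectivity of $\varphi$, must act as the identity on all of $\A' = \varphi(\A)$ and hence coincide with $1_{\A'}$. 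With all hypotheses of the corollary of Theorem \ref{mainthm2} in place, that corollary applies verbatim and yields the desired conclusion.

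The main obstacle, and really the only step that is not a mere invocation, is the verification of $(\spadesuit)$ and $(\clubsuit)$ for prime alternative $*$-algebras. Given $x(\A e_i) = \{0\}$, the intended argument is to interpret the relation as saying that a certain right-ideal-like object built from $x$ annihilates the nonzero two-sided ideal generated by the symmetric idempotent $e_i$, whereupon primeness forces $x = 0$. In the non-associative alternative setting this promotion from an element-level identity to an ideal-level annihilation relation is not automatic: it needs the Peirce decomposition $\A = \A_{11} \oplus \A_{12} \oplus \A_{21} \oplus \A_{22}$, the flexibility identity, and the less familiar relation $\A_{ij}\A_{ij} \subseteq \A_{ji}$ of the Peirce calculus to be handled with some care. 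Once this lemma is secured, the $W^{*}$-factor statement follows as an immediate corollary of Theorem \ref{mainthm2}.
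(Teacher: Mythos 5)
Your reduction is exactly the one the paper intends: the paper offers no written proof of this theorem, but the evident argument is that an alternative $W^{*}$-factor is a unital prime alternative $*$-algebra over $\mathbb{C}$, so $(\spadesuit)$ and $(\clubsuit)$ hold and the corollary of Theorem \ref{mainthm2} applies verbatim. Your identification of what must be checked is right, and your remark that verifying $(\spadesuit)$ and $(\clubsuit)$ for prime alternative algebras is the only step with real content is fair, since the paper dismisses it as ``easy to see'' without proof.

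The one step in your write-up that would fail as written is the derivation of unitality of $\varphi$. You propose a ``Lemma \ref{lemam0}-type computation'' with $\xi=a=b=1_{\A}$, but Lemma \ref{lemam0} crucially uses additivity of $\varphi$ to rewrite $\varphi(2^{n-1}e_i)$ as $2^{n-1}\varphi(e_i)$, and additivity is itself established (Theorem \ref{mainthm1}) under the standing hypothesis that $\varphi$ is unital; so the argument is circular. Without additivity, the hypothesis applied to $\xi=a=b=1_{\A}$ only says $\varphi(2^{n-1}1_{\A})=q_{n*}(u,\ldots,u)$ with $u=\varphi(1_{\A})$, and for a non-symmetric $u$ the expression $q_{n*}(u,\ldots,u)=\{\ldots\{u,u\}_{*},\ldots,u\}_{*}$ does not simplify so as to exhibit $u$ as a symmetric idempotent, let alone as the identity of $\A'$. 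The honest reading is that the theorem's statement silently carries over the ``bijective unital'' hypotheses of Theorem \ref{mainthm2} (the paper omits them here without comment, and the same remark applies to your claim that bijectivity is ``implicit in the conclusion''); if one insists on deriving them from the displayed identity alone, a genuine argument is needed and yours does not supply it. Everything else in your proposal is sound and coincides with the paper's intended route.
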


\begin{corollary}
Let $\A$ and $\A'$ be two alternative $W^{*}$-factors. In this case, $\varphi$ is a nonlinear $*$-Jordan-type map if and only if $\varphi$ is an additive $*$-ring isomorphism.
  
\end{corollary}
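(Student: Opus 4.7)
The plan is to reduce directly to the corollary following Theorem \ref{mainthm2}: once I check that any pair of alternative $W^{*}$-factors fits the hypotheses of that corollary, the conclusion is immediate and nothing further needs to be proved.

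First I would dispatch the purely algebraic prerequisites. Every non-zero alternative $W^{*}$-algebra is unital (as recalled in the paragraph introducing the application section), so both $\A$ and $\A'$ have identities $1_{\A}$ and $1_{\A'}$. Because an alternative $W^{*}$-factor is, by definition, a \emph{prime} alternative $W^{*}$-algebra and the ground field $\mathbb{C}$ has characteristic different from $2$ and $3$, the remark preceding that corollary already delivers both conditions $(\spadesuit)$ and $(\clubsuit)$ for free. The standing requirements that $\varphi$ be bijective and unital are implicit in the convention for multiplicative $*$-Jordan-type maps used throughout the paper.

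The remaining — and, in my view, only genuinely non-algebraic — step is to produce a nontrivial symmetric idempotent $e_{1} \in \A$, so that $e_{2} = 1_{\A} - e_{1}$ yields the Peirce decomposition exploited in Section~1. Here I would appeal to the spectral theory for alternative $C^{*}$-algebras developed in the works of Cabrera/Miguel cited at the start of Section~3: in any alternative $W^{*}$-factor which is not itself a division algebra, one obtains a nontrivial self-adjoint projection. This functional-analytic input is precisely where the $W^{*}$-structure enters essentially; once it is granted, the corollary of Theorem~\ref{mainthm2} applies directly, giving that $\varphi$ is a $*$-ring isomorphism, while Theorem~\ref{mainthm1} simultaneously supplies the $*$-additivity.

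The principal obstacle is therefore the existence of this nontrivial projection, together with the subsidiary task of ruling out the degenerate situation in which $\A$ is itself a division $W^{*}$-algebra (where no nontrivial idempotent can exist and the theorem would have to be stated vacuously or under an additional hypothesis). Everything else in the proof amounts to citing results already established in Section~2.
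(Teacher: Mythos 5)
Your reduction of the forward implication to the corollary of Theorem \ref{mainthm2} is essentially the route the paper intends: alternative $W^{*}$-factors are prime and unital, so $(\spadesuit)$ and $(\clubsuit)$ hold, and the $*$-Jordan-type hypothesis on $\varphi$ then yields a $*$-ring isomorphism. Your observation that one must still produce a nontrivial symmetric idempotent $e_{1}$ (and worry about the degenerate case where $\A$ is a division algebra) is a legitimate point that the paper silently passes over, and flagging it is to your credit.

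However, the statement is a biconditional, and you prove only one direction. You never address the converse: that an additive $*$-ring isomorphism $\varphi$ is a multiplicative $*$-Jordan-type map, i.e.\ satisfies $\varphi(q_{n*}(x_1,\ldots,x_n)) = q_{n*}(\varphi(x_1),\ldots,\varphi(x_n))$. This direction is short but not vacuous: since $\varphi$ is additive, multiplicative, and $*$-preserving, one has
\[
\varphi(\{a,b\}_*) = \varphi(ab) + \varphi(ba^*) = \varphi(a)\varphi(b) + \varphi(b)\varphi(a)^* = \{\varphi(a),\varphi(b)\}_*,
\]
and an induction on $n$ using the recursive definition $q_{n*}(x_1,\ldots,x_n) = \{q_{(n-1)*}(x_1,\ldots,x_{n-1}), x_n\}_*$ extends this to all $q_{n*}$. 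Without this paragraph your argument establishes only the ``only if'' half of the corollary, so the proposal as written is incomplete.
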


\section{Conflict of interest}
On behalf of all authors, the corresponding author states that there is no conflict of interest.

\end{document}